\documentclass[12pt]{amsart}

\usepackage[top=2.5cm, bottom=2.5cm, left=3cm, right=2.8cm]{geometry}

\usepackage{amsthm}
\usepackage{enumitem}
\usepackage{graphicx}
\usepackage{xcolor}
\usepackage{fancyhdr}
\usepackage[colorlinks=true, linkcolor=blue!60!black, citecolor=blue!60!black, urlcolor=blue!60!black]{hyperref}
\usepackage{soul}

\theoremstyle{plain}
\newtheorem{maintheorem}{Theorem}

\newtheorem{theorem}{Theorem}[section]
\newtheorem{lemma}[theorem]{Lemma}
\newtheorem{proposition}[theorem]{Proposition}
\newtheorem{corollary}[theorem]{Corollary}

\theoremstyle{definition}
\newtheorem{definition}[theorem]{Definition}

\theoremstyle{remark}

\usepackage[colorlinks=true, linkcolor=blue!60!black, citecolor=blue!60!black, urlcolor=blue!60!black]{hyperref}

\usepackage{amsmath}
\DeclareMathOperator{\Ric}{Ric}
\DeclareMathOperator{\tr}{tr}
\DeclareMathOperator{\graph}{graph}

\begin{document}

\title{A Metric Entropy Formula for Magnetic Flows without Conjugate Points}

\author{Anthony J. García}
\email{anthonyjgg70@gmail.com}

\author{Guilherme Brandão Guglielmo}
\email{guilhermebrandaoguglielmo@gmail.com}

\subjclass{37D40 (primary), 53C15, 53C22, 37J25 (secondary).}

\keywords{Magnetic flows, Conjugate points, Green bundles, Entropy, Freire-Mañé formula, Focal points.}

\begin{abstract}
In this paper, we use the recently introduced notation and terminology for magnetic systems, which provide a natural framework for extending techniques and results from the theory of geodesic flows to magnetic flows. In this context, we prove a magnetic analogue of the classical Freire--Mañé theorem and obtain some applications. As a consequence, we show that for an energy level of a magnetic system without focal points, either the metric entropy is positive or the magnetic curvature is identically zero.

\end{abstract}

\maketitle 
\section{Introduction}
Magnetic flows provide a rich class of dynamical systems lying at the intersection of differential geometry, Hamiltonian dynamics, and ergodic theory. Over the last decades, many aspects of their dynamics have been extensively investigated. Among the various quantities used to measure dynamical complexity, entropy has played a particularly prominent role, and numerous formulas and bounds for both topological and metric entropy have been established; see \cite{grognet1999entropies,burns2002anosov,paternain1997first}.

Hyperbolic properties of magnetic flows have also attracted considerable attention. In particular, several authors have studied the existence of Anosov magnetic flows, invariant splittings, Lyapunov exponents, and their relations with geometric quantities such as curvature and the magnetic field; see \cite{gouda1997magnetic,wojtkowski2000magnetic,paternain1997regularity}.

Another central topic in the study of magnetic flows concerns the absence of conjugate points. As in the Riemannian setting, the absence of conjugate points imposes strong restrictions on the geometry and dynamics of the system and is closely related to rigidity phenomena, hyperbolicity, and the global behavior of magnetic geodesics. This condition has been investigated from several perspectives, leading to important developments in both geometry and dynamics. In particular, Bialy proved a magnetic analogue of the Hopf conjecture for conformally flat metrics on the torus, showing that a magnetic flow without conjugate points must be the geodesic flow of a flat metric \cite{bialy2000rigidity}.

Given a closed Riemannian manifold $(M,g)$ and a $2$-form $\sigma$ on $M$, let $\pi:TM\to M$ denote the canonical projection. Consider the twisted symplectic form
\[
\omega_\sigma=\omega_{\mathrm{can}}+\pi^*\sigma.
\]
The Hamiltonian $H(x,v)=\frac12\|v\|^2$ with respect to $\omega_\sigma$ generates a flow whose trajectories satisfy
\begin{equation}\label{mag eq}
    \nabla_{\dot\gamma}\dot\gamma=\Omega(\dot\gamma),
\end{equation}
where $\nabla$ denotes the Levi--Civita connection of $(M,g)$ and $\Omega$ is the bundle endomorphism determined by
\[
\sigma(u,v)=g(\Omega(u),v)
\]
for all $u,v\in TM$. The operator $\Omega$ is called the \emph{Lorentz force} associated with $\sigma$ and the curves satisfying \eqref{mag eq} \textit{magnetic geodesics.} When the $2$-form $\sigma$ is exact, the corresponding magnetic system can be regarded as a Tonelli Hamiltonian system on $T^*M$; see \cite{contreras1999global}.

The vector field generating \eqref{mag eq} is called the \emph{magnetic vector field} and is denoted by $X^{g,\sigma}$. Its flow $\Phi_t:TM\to TM$ is called the \emph{magnetic flow} associated with the pair $(g,\sigma)$. Since $H$ is preserved by the magnetic flow $\Phi_t$, it is natural to study the dynamics of $\Phi_t$ on each energy level. \\

\noindent \textbf{Convention.} In this article, we shall study the magentic flow $\Phi_t$  restricted  at level $\Sigma_s:=\{v\in TM:\|v\|=s\},$ for some $s>0$.\\

In the Riemannian setting, the Jacobi equation plays a fundamental role in describing the behavior of geodesics and in the study of manifolds without conjugate points. In the magnetic case, however, the presence of the Lorentz force introduces additional terms that make the variational equation considerably more complicated. An important step towards recovering the classical geometric picture was taken by Assenza \cite{assenza2024magnetic}, who introduced a twisted covariant derivative and a magnetic curvature operator. With these tools, the orthogonal component of a magnetic Jacobi field satisfies a second-order equation formally analogous to the classical Jacobi equation, making it possible to adapt several techniques from Riemannian geometry to the magnetic setting.

In particular, in the absence of conjugate points, Assenza, Reber, and Terek \cite{assenza2025magnetic} obtained a geometric description of the magnetic Green bundles $E^{\sigma,-},E^{\sigma;+}$in terms of magnetic Jacobi fields. More precisely, they showed that these bundles arise as limits of suitable Lagrangian subspaces determined by normal magnetic Jacobi fields, extending the classical constructions of Green \cite{green1958theorem}, Eberlein \cite{eberlein1973geodesic} for geodesic flows. This description allows one to associate with the Green bundles symmetric solutions $\mathcal U^+_{v}$ and $\mathcal U^-_{v}$ of the magnetic Riccati equation, and provides an effective tool for studying the geometry and dynamics of magnetic flows.

Our main result extends the classical formula of Freire and Mañé for geodesic flows \cite{freire1982entropy} to magnetic flows.
\begin{maintheorem}\label{teorema expoente}
Assume that the magnetic flow $\Phi_t:\Sigma_s\to\Sigma_s$ has no conjugate points, let $\mu$ be a $\Phi_t$-invariant probability measure, and denote by $\Psi_t=d\Phi_t$ the linearized magnetic flow. Then, for $\mu$-almost every $v\in\Sigma_s$, the limit
\[
\chi^\sigma(v):=\lim_{T\to+\infty}\frac1T\log\left|\det\!\left(\Psi_T\big|_{E^{\sigma,-}(v)}\right)\right|
\]
exists and satisfies
\[
\chi^\sigma(v)=\lim_{T\to+\infty}\frac1T\int_0^T\tr\bigl(\mathcal U^-_{\Phi_t(v)}(0)\bigr)\,dt.
\]

\end{maintheorem}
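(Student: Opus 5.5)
The plan is to follow the Freire--Mañé strategy. Express $\Psi_t$ restricted to the Green bundle $E^{\sigma,-}$ through magnetic Jacobi fields. Show that the determinant along the orbit is governed by the Riccati solution $\mathcal U^-$. Then use Birkhoff's ergodic theorem to get the almost-everywhere existence of the limit.

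\textbf{Step 1: the Green bundle as a graph.} By the description of Assenza--Reber--Terek, for each $v\in\Sigma_s$ the bundle $E^{\sigma,-}(v)$ is, in horizontal/vertical coordinates adapted to Assenza's twisted covariant derivative, the graph of $\mathcal U^-_v(0)$ over the normal horizontal space, possibly together with the flow direction. Equivalently, every $\xi\in E^{\sigma,-}(v)$ is $\xi=(J(0),J'(0))$ for a normal magnetic Jacobi field with $J'(t)=\mathcal U^-_v(t)J(t)$. Invariance of the bundle gives $\mathcal U^-_v(t)=\mathcal U^-_{\Phi_t(v)}(0)$. The flow direction, if included, contributes only a bounded factor, since $\|X^{g,\sigma}\|$ is constant on $\Sigma_s$, and so does not affect the limit.

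\textbf{Step 2: reduce the determinant to a Jacobi tensor.} Let $Y(t)$ be the solution of $Y'=\mathcal U^-_v(t)Y$ with $Y(0)=I$ on the normal bundle along the magnetic geodesic, parallel-transported with respect to the twisted derivative. Then $\Psi_t|_{E^{\sigma,-}(v)}$ is conjugate to $Y(t)$ via the graph maps $\xi\mapsto J(0)$ at times $0$ and $t$, and these are the projections $d\pi$. Their norms and the norms of their inverses are bounded by $1+\|\mathcal U^-\|$. I will then use Liouville's formula,
\[
\frac{d}{dt}\log|\det Y(t)|=\tr\mathcal U^-_v(t)=\tr\mathcal U^-_{\Phi_t(v)}(0),
\]
where the twisted transport preserves volume on the normal space, being an isometry. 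This gives
\[
\log\left|\det\!\left(\Psi_T|_{E^{\sigma,-}(v)}\right)\right|=\int_0^T\tr\bigl(\mathcal U^-_{\Phi_t(v)}(0)\bigr)\,dt+O(1),
\]
where the $O(1)$ term comes from the distortion of the graph maps and the choice of volume on $E^{\sigma,-}$.

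\textbf{Step 3: the main obstacle.} The hard point is controlling the $O(1)$ term. This needs uniform boundedness of $\mathcal U^-$ on $\Sigma_s$, the magnetic analogue of Green's/Eberlein's bound. I would obtain it by comparing $\mathcal U^-$ with the solutions $\mathcal U_{v,r}$ vanishing at $-r$. Since $\Sigma_s$ is compact, the magnetic curvature operator is bounded below by some $-k^2$, and a Riccati comparison then gives $\|\mathcal U_{v,r}\|\le C(k)$ for $r\ge1$; this bound passes to the limit. If only measurability, rather than continuity, of $\mathcal U^-$ is available, boundedness still makes $\tr\mathcal U^-_{(\cdot)}(0)$ a bounded measurable function, hence in $L^1(\mu)$.

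\textbf{Step 4: existence of the limit.} Birkhoff's ergodic theorem gives, for $\mu$-a.e.\ $v$, existence of $\lim_T\frac1T\int_0^T\tr\,\mathcal U^-_{\Phi_t(v)}(0)\,dt$. Dividing the identity of Step 2 by $T$ kills the $O(1)$ term. Hence $\chi^\sigma(v)$ exists and equals this limit.
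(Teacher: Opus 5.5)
\textbf{Verdict: correct, but existence of the limit is obtained by a different route.} Your core argument is the paper's. You conjugate $\Psi_T|_{E^{\sigma,-}(v)}$ to the Jacobi--Green operator $Y^-_v(T)$ through the projections $d\pi$. You apply Liouville's formula $\frac{d}{dt}\log|\det Y^-_v(t)|=\tr\mathcal U^-_{\Phi_t(v)}(0)$. You use the uniform bound on $\mathcal U^-$ to make the distortion term $O(1)$ uniformly in $T$.

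The paper first proves, via Oseledets' theorem and a graph-transform argument, that $E^u(v)\subset E^{\sigma,-}(v)\subset E^u(v)\oplus E^c(v)$ (Proposition~\ref{osedelecs}). It then reads off existence of the determinant limit from the Oseledets splitting. You instead apply Birkhoff's theorem to the bounded measurable function $v\mapsto\tr\mathcal U^-_v(0)$ and transfer existence through the $T$-uniform $O(1)$ estimate. Your route is shorter, needs no information about Lyapunov exponents, and fully suffices for the theorem as stated. What the paper's detour buys is that $\chi^\sigma(v)$ equals the sum of the positive Lyapunov exponents at $v$. This holds because $E^{\sigma,-}$ contains $E^u$ and lies inside $E^u\oplus E^c$. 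It is this identification that lets the paper apply Ruelle's inequality and Pesin's formula to $\mathfrak h^\sigma_\mu$ in the subsequent corollaries. Your argument alone does not give it.

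\textbf{A wording fix in Step 3.} The approximating family must be the Riccati operators of Jacobi tensors that vanish at $-r$. These $\mathcal U_{v,r}$ blow up at $-r$; they do not vanish there. Riccati solutions with $\mathcal U(-r)=0$ correspond to twisted-horizontal initial data, and they may fail to exist on $(-r,0]$ when there are focal points. The bound you get is two-sided for two separate reasons:
\begin{itemize}
\item Comparison with constant curvature $-k^2$, using the singularity at $-r$, gives $\mathcal U_{v,r}(0)\le k\coth(kr)$.
\item The lower bound $\mathcal U_{v,r}(0)\ge -k$ uses that $\mathcal U_{v,r}$ exists for all forward time, which is where the absence of conjugate points enters.
\end{itemize}
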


This formula was also obtained for Finsler geodesic flows by Foulon~\cite{foulon1992estimation}, and for convex Hamiltonian systems by Contreras and Iturriaga in \cite{contreras1999convex}, the latter including the class of exact magnetic flows. The main advantage of the approach presented here lies in its direct connection with the magnetic curvature tensor $M^\Omega_s$ (see Section~\ref{Geometric preliminaires}), introduced in \cite{assenza2024magnetic}, which allows us to exploit its geometric properties.

Theorem~\ref{teorema expoente} yields several interesting geometric bounds for the metric entropy of the magnetic flow with respect to invariant probability measures.

Let $(M,g,\sigma)$ be a magnetic system without conjugate points, and let
$\mu$ be a $\Phi_t$-invariant probability measure on $\Sigma_s$.
Define the number
\[
\mathfrak h_\mu^\sigma
:=\int_{\Sigma_s}\chi^\sigma\,d\mu.
\]

In general, Ruelle's inequality \cite{ruelle1978inequality} implies that for every invariant probability measure on $\Sigma_s$,
\[
h_\mu(\Phi)\le\mathfrak h_\mu^\sigma.
\]

It is well known that magnetic flows preserve the Liouville measure (see \cite{wojtkowski2000magnetic}, for more details). For invariant probability measures $\mu$ that are absolutely continuous
with respect to the Lebesgue measure on the energy level $\Sigma_s$,
Pesin's entropy formula \cite{pesin1977characteristic} implies that
\[
h_{\mu}(\Phi)=\int_{\Sigma_s}\chi^\sigma(v)\,d\mu.
\]

Combining Theorem~\ref{teorema expoente} with Birkhoff's Ergodic Theorem,
we obtain
\[
\int_{\Sigma_s}\chi^\sigma\,d\mu=\int_{\Sigma_s}\operatorname{tr}\bigl(\mathcal U^-_{v}(0)\bigr)\,d\mu.
\]

More precisely, we have proved the following. 
\begin{corollary}\label{Corolário Lebesgue}
Let $(M,g,\sigma)$ be a magnetic system without conjugate points on $\Sigma_s$, and let
$\mu$ be an absolutely continuous invariant measure on $\Sigma_s$.
Then
\[
h_{\mu}(\Phi)=\mathfrak h_{\mu}^{\sigma}=\int_{\Sigma_s}\tr\bigl(\mathcal U^-_v(0)\bigr)d\mu.
\]
\end{corollary}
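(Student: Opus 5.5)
The corollary follows by chaining three facts: Pesin's entropy formula, Theorem~\ref{teorema expoente}, and Birkhoff's Ergodic Theorem. The idea is to identify the sum of the positive Lyapunov exponents with $\chi^\sigma$, and then to replace the time average of $\tr\,\mathcal U^-$ by its space average.

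\textbf{Step 1: $h_\mu(\Phi)=\mathfrak h^\sigma_\mu$.} By Pesin's formula, for $\mu$ absolutely continuous,
\[
h_\mu(\Phi)=\int_{\Sigma_s}\sum_{\lambda_i(v)>0}\lambda_i(v)\,d\mu.
\]
So I must show that, $\mu$-a.e., the sum of the positive exponents equals $\chi^\sigma(v)$. The exponents on $\Sigma_s$ come in pairs $\pm\lambda$, plus a zero exponent in the flow direction. This holds because $\Psi_t$ preserves $\omega_\sigma$ restricted to the transversal $T\Sigma_s/\langle X^{g,\sigma}\rangle$, which is symplectic of dimension $2(n-1)$.

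The Green bundle $E^{\sigma,-}$ is Lagrangian of dimension $n-1$ and invariant. It contains the Oseledets stable subspace and is contained in the sum of the stable and central subspaces. This is the usual Freire--Mañé argument: vectors in $E^{\sigma,-}$ cannot grow exponentially in backward time, and symmetrically for $E^{\sigma,+}$. So the exponents of $\Psi_t|_{E^{\sigma,-}}$ are exactly the $n-1$ nonnegative exponents of the symplectic pair.

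The volume growth rate $\chi^\sigma(v)$ is the sum of the exponents of the restricted cocycle, by Oseledets applied to the invariant subbundle. Hence $\chi^\sigma(v)=\sum_{\lambda_i>0}\lambda_i(v)$ $\mu$-a.e. Integrating gives $h_\mu(\Phi)=\mathfrak h^\sigma_\mu$. Alternatively, one can combine Ruelle's inequality $h_\mu\le \mathfrak h^\sigma_\mu$ (stated above) with Pesin's lower bound.

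\textbf{Step 2: the trace formula.} By Theorem~\ref{teorema expoente}, $\chi^\sigma(v)$ is the time average of $f(v):=\tr\bigl(\mathcal U^-_v(0)\bigr)$ along the orbit. To apply Birkhoff I need $f\in L^1(\mu)$. Since $\Sigma_s$ is compact and $\mathcal U^-$ is bounded under no conjugate points, I will use the standard Green-bundle bound $\|\mathcal U^\pm_v(0)\|\le C$. This comes from comparison of the magnetic Riccati equation with bounded curvature $M^\Omega_s$ (the magnetic analogue of Eberlein's estimate). With it, $f$ is bounded and measurable, being a limit of continuous functions.

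Birkhoff then gives that the time averages converge a.e. to $\bar f$ with $\int\bar f\,d\mu=\int f\,d\mu$. Therefore
\[
\mathfrak h^\sigma_\mu=\int\chi^\sigma\,d\mu=\int \tr\bigl(\mathcal U^-_v(0)\bigr)\,d\mu .
\]

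\textbf{Main obstacle.} The delicate point is Step~1: showing that $E^{\sigma,-}$ carries exactly the nonnegative Oseledets exponents, i.e.\ that the stable Oseledets space lies inside $E^{\sigma,-}$ and that $E^{\sigma,-}$ has no negative-exponent directions. I would adapt Freire--Mañé's argument. First, $E^{\sigma,-}$ is the limit of the subspaces $\Psi_{t}(\text{vertical})$ as $t\to+\infty$, so it is transverse to the vertical bundle. Second, the Lagrangian Grassmannian ordering of $\mathcal U^-\le\mathcal U^+$ forces any vector contracting exponentially in forward time to lie in $E^{\sigma,-}\cap E^{\sigma,+}$-compatible positions. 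The uniform boundedness of $\mathcal U^\pm$ is the other technical input I would have to verify in the magnetic setting.
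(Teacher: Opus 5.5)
Your architecture is the paper's: Pesin's formula, then Theorem~\ref{teorema expoente} and Birkhoff, with integrability coming from the uniform bound on $\mathcal U^-$ (Proposition~\ref{Riccati limitado}). The problem is the one non-formal step, the identification $\chi^\sigma=\sum_{\lambda_i>0}\lambda_i$ in Step~1. You state it the wrong way round and do not prove it.

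You assert that $E^{\sigma,-}$ contains the Oseledets \emph{stable} space and lies in $E^s\oplus E^c$. If that were so, the exponents of $\Psi_t|_{E^{\sigma,-}}$ would be all the negative exponents plus zeros. That gives $\chi^\sigma=-\sum_{\lambda_i>0}\lambda_i$ and hence $h_\mu=-\mathfrak h^\sigma_\mu$. Your next sentence, that $E^{\sigma,-}$ carries the nonnegative exponents, does not follow from this. Your ``main obstacle'' formulation (the stable space lies in $E^{\sigma,-}$ and $E^{\sigma,-}$ has no negative-exponent directions) contradicts itself as soon as $E^s(v)\neq\{0\}$, i.e.\ exactly when $v$ has positive exponents.

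The heuristic you offer for it, an ordering of $\mathcal U^\pm$ forcing contracting vectors into ``compatible positions'', is not an argument. Note also that with the paper's labels $\mathcal U^+$ is the stable operator, so the classical ordering reads $\mathcal U^+\le\mathcal U^-$.

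Your fallback does not avoid the issue. Ruelle's inequality itself only gives $h_\mu\le\int\sum_{\lambda_i>0}\lambda_i\,d\mu$. The form $h_\mu\le\mathfrak h^\sigma_\mu$ quoted in the introduction already presupposes $\int\chi^\sigma\,d\mu\ge\int\sum_{\lambda_i>0}\lambda_i\,d\mu$, which is the nontrivial half of the same identification. The opposite inequality is automatic, since an invariant subbundle of dimension $n-1$ carries at most the top $n-1$ exponents.

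The missing ingredient is Proposition~\ref{osedelecs}: $E^u(v)\subset E^{\sigma,-}(v)\subset E^u(v)\oplus E^c(v)$ for $\mu$-a.e.\ $v$. The paper proves the two inclusions as follows.

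\textbf{Second inclusion.} First it shows $\mathcal V_v\cap E^s(v)=\{0\}$. Vertical vectors give Jacobi fields vanishing at $t=0$, whose normal parts diverge by Proposition~\ref{Jacobi divergencia}, so they cannot contract exponentially. Hence $\mathcal V_v$ is the graph of a map from a subspace of $E^c(v)\oplus E^u(v)$ into $E^s(v)$. The paper then pushes $\mathcal V_{\Phi_{-t_n}(v)}$ forward along return times to a set with uniform Oseledets constants. There the graph maps shrink like $(\lambda\eta)^{t_n}\to0$, so the limit $E^{\sigma,-}(v)$ lies in $E^c(v)\oplus E^u(v)$.

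\textbf{First inclusion.} In the Lagrangian splitting $E^{\sigma,-}\oplus\mathfrak J E^{\sigma,-}$, $\Psi_t$ is block upper triangular with diagonal blocks $A_t$ and $(A_t^*)^{-1}$. So a vector with nonzero $\mathfrak J E^{\sigma,-}$-component cannot contract exponentially in backward time, which gives $E^u\subset E^{\sigma,-}$.

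With both inclusions, the exponents on $E^{\sigma,-}$ are all the positive ones plus zeros, so $\chi^\sigma=\sum_{\lambda_i>0}\lambda_i$ a.e. Pesin's formula then gives $h_\mu=\mathfrak h^\sigma_\mu$, and the rest of your argument goes through as in the paper.
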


The next result provides an upper bound for $\mathfrak h_{\mu}^{\sigma}$ in terms of the integral of the Ricci tensor.
\begin{corollary}\label{segundo corolario}
Let $(M,g,\sigma)$ be a magnetic system without conjugate points and
let $\mu$ be a $\Phi_t$-invariant probability measure on the magnetic
energy level $\Sigma_s$. Then
\[
\mathfrak h_\mu^\sigma\le(n-1)\left(-\int_{\Sigma_s}\Ric^\Omega_s(v)\,d\mu\right)^{1/2}.
\]

Moreover, if equality holds, then the magnetic curvature operator
$( M_s^\Omega)_v$ is a scalar multiple of the identity
for $\mu$-almost every $v$.
\end{corollary}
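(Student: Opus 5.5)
This follows the Freire--Mañé / Osserman--Sarnak argument for geodesic flows, applied to the magnetic Riccati equation. For each $v\in\Sigma_s$, $\mathcal U^-_v(t)$ is a symmetric endomorphism of the normal bundle $\{\dot\gamma_v\}^\perp$ along the magnetic geodesic $\gamma_v$. It satisfies the magnetic Riccati equation
\[
(\mathcal U^-)'+(\mathcal U^-)^2+(M^\Omega_s)_{\dot\gamma_v}=0,
\]
where the derivative is Assenza's twisted covariant derivative. I use that this derivative is compatible with the metric on the normal bundle, so that $\frac{d}{dt}\tr\,\mathcal U^-=\tr\,(\mathcal U^-)'$. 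I also use the paper's convention $\Ric^\Omega_s(v)=\tr\,(M^\Omega_s)_v$.

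\textbf{Step 1 (trace of the Riccati equation).} Taking traces gives
\[
\frac{d}{dt}\tr\,\mathcal U^-_v(t)+\tr\bigl((\mathcal U^-_v(t))^2\bigr)+\Ric^\Omega_s(\Phi_t v)=0.
\]
Since $\mathcal U^-$ is symmetric of rank $n-1$, Cauchy--Schwarz gives
\[
\tr\,(\mathcal U^-)^2\ge \frac{(\tr\,\mathcal U^-)^2}{n-1},
\]
with equality exactly when $\mathcal U^-$ is a multiple of the identity.

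\textbf{Step 2 (integration).} I need $\tr\,\mathcal U^-_v(0)$ to be bounded on $\Sigma_s$. This should follow from the compactness of $M$ together with the standard comparison bounds on Green solutions under no conjugate points, which Assenza--Reber--Terek provide. Since $\mathcal U^-_{\Phi_t v}(0)=\mathcal U^-_v(t)$, integrating the traced equation against $\mu$ makes the derivative term vanish by invariance:
\[
\int\frac{d}{dt}\Big|_{t=0}\tr\,\mathcal U^-_{\Phi_t v}(0)\,d\mu=0.
\]
This can be justified by integrating over $[0,T]$, dividing by $T$, and using boundedness. The result is
\[
\int\tr\,(\mathcal U^-_v(0))^2\,d\mu=-\int\Ric^\Omega_s\,d\mu .
\]
In particular the right-hand side is nonnegative, so the square root in the statement makes sense.

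\textbf{Step 3 (conclusion).} By Theorem~\ref{teorema expoente} and Birkhoff's theorem, $\mathfrak h^\sigma_\mu=\int\tr\,\mathcal U^-_v(0)\,d\mu$. This also holds for nonergodic $\mu$, since the Birkhoff averages integrate to the same value. Then, using Cauchy--Schwarz twice (once in $L^2(\mu)$, once on the traces),
\[
\mathfrak h^\sigma_\mu\le\Bigl(\int(\tr\,\mathcal U^-)^2d\mu\Bigr)^{1/2}\le\Bigl((n-1)\int\tr\,(\mathcal U^-)^2d\mu\Bigr)^{1/2}=\Bigl(-(n-1)\int\Ric^\Omega_s\,d\mu\Bigr)^{1/2}.
\]
This is in fact sharper than the stated bound by a factor $\sqrt{n-1}$. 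Since $(n-1)^{1/2}\le n-1$, it implies the stated inequality.

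\textbf{Equality case.} Equality in the stated bound forces equality in every step. For $n>2$ this means both sides vanish: then $\tr\,\mathcal U^-=0$ a.e., hence $\tr\,(\mathcal U^-)^2=0$ a.e., so $\mathcal U^-=0$ a.e. along orbits, and the Riccati equation gives $M^\Omega_s=0$, a scalar multiple of the identity. For $n=2$ the claim is trivial, since the normal space is one-dimensional. More generally, equality in my sharper bound gives $\mathcal U^-_v(t)=c(t)\,\mathrm{Id}$ for a.e. $v$ and all $t$, by invariance. Then $(\mathcal U^-)'$ and $(\mathcal U^-)^2$ are scalar, so the Riccati equation forces $(M^\Omega_s)_{\Phi_t v}$ to be scalar.

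\textbf{Main obstacle.} The delicate point is Step 2: justifying boundedness or integrability of $\tr\,\mathcal U^-$, and checking that the twisted derivative of the trace is the ordinary derivative. I would first try to verify the latter directly from the definition of the twisted connection in Section~\ref{Geometric preliminaires}.
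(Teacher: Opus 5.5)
Your route is the paper's route. You trace the Riccati equation, integrate against $\mu$ so the derivative term dies by invariance, apply Cauchy--Schwarz once in $L^2(\mu)$ and once on the eigenvalues of $\mathcal U^-_v(0)$, and read off the equality case from $\mathcal U^-$ being scalar.

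The one real error is a misreading of the normalization. The paper defines $\Ric^\Omega_s(v)=\frac{1}{n-1}\tr\bigl((M^\Omega_s)_v\bigr)$, not $\tr\bigl((M^\Omega_s)_v\bigr)$. With the correct definition, your Step~2 reads $\int\tr\bigl((\mathcal U^-_v(0))^2\bigr)\,d\mu=-(n-1)\int\Ric^\Omega_s\,d\mu$, which is the paper's identity \eqref{integral tr and Ric}. Your chain of inequalities then gives exactly $\mathfrak h^\sigma_\mu\le(n-1)\bigl(-\int\Ric^\Omega_s\,d\mu\bigr)^{1/2}$. The inequality is correctly proved, but there is no extra factor $\sqrt{n-1}$ to gain.

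This does damage your equality case. Your first argument, that for $n>2$ equality in the stated bound forces both sides to vanish, rests entirely on that spurious gap, and it is false. Take $\sigma=0$ on a closed hyperbolic manifold of curvature $-1$ with $s=1$. Then $(M^\Omega_s)_v=-I$, $\Ric^\Omega_s\equiv-1$ and $\mathcal U^-\equiv I$. So $\mathfrak h^\sigma_\mu=n-1=(n-1)\bigl(-\int\Ric^\Omega_s\,d\mu\bigr)^{1/2}$ for every invariant $\mu$, with nothing vanishing.

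Discard that branch and keep the argument you label ``more generally''. Equality in both Cauchy--Schwarz steps forces $\mathcal U^-_v(0)$ to be scalar for $\mu$-a.e.\ $v$. By invariance and continuity in $t$, this gives $\mathcal U^-_v(t)=\lambda_v(t)I$, and the Riccati equation then makes $(M^\Omega_s)_v$ scalar. That is exactly the paper's proof.

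Your ``main obstacle'' is lighter than you fear. Since $\mathcal U^-_v(t)=P_{-t}\circ\mathcal U^-_{\Phi_t(v)}(0)\circ P_t$, the identity $\tr\,\mathcal U^-_v(t)=\tr\,\mathcal U^-_{\Phi_t(v)}(0)$ is just conjugation invariance of the trace. The paper takes uniform boundedness of $\mathcal U^-$ from Proposition~\ref{Riccati limitado}.
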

 As a consequence, magnetic flows with vanishing magnetic curvature at a given energy level have zero metric entropy with respect to the Liouville measure. A nontrivial example of such a system is provided by the horocycle flow, whose magnetic curvature vanishes at a distinguished energy level; see \cite{paternain2006magnetic}.

In their seminal paper \cite{freire1982entropy}, Freire and Mañé conjectured that every closed Riemannian manifold without conjugate points has positive metric entropy with respect to the Liouville measure, unless it is flat. To the best of our knowledge, the conjecture remains open in general, although some weaker versions have been established. For instance, Pesin proved the conjecture for Riemannian manifolds without focal points \cite{pesin1978equations}. Moreover, Knieper \cite{knieper1985mannigfaltigkeiten} proved that, on closed manifolds with continuous Green bundles, the topological entropy is positive if and only if the Liouville entropy is positive. In particular, combined with the work of Dinaburng \cite{dinaburg1971relations}, this implies the conjecture for closed surfaces whose Green bundles are continuous. In particular, combined with the work of Dinaburg \cite{dinaburg1971relations}, this implies the conjecture for closed surfaces whose Green bundles are continuous. However, continuity of the Green bundles is not automatic. Indeed, Burns and Ballmann \cite{ballmann1987surfaces} constructed examples of closed surfaces with discontinuous Green bundles.

In this article, we extend Pesin's result to magnetic flows without focal points. The two-dimensional case was recently obtained by Rebber in \cite{marshall2025marked}. The relevance of the no-focal-points condition is that it implies the positive semidefiniteness of the unstable Riccati operator.

\begin{maintheorem}\label{teorema rigidez}
Let $(M,g,\sigma)$ be a compact magnetic system without focal points, and let
$\mu_L$ be a Lebesgue measure on the energy level
$\Sigma_s$. If $(M,g,\sigma)$ is non-flat at level $s$ then then $h_{\mu_L}(\Phi)>0$.
\end{maintheorem}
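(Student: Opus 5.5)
By Corollary~\ref{Corolário Lebesgue}, the plan is to show that $\int_{\Sigma_s}\tr\bigl(\mathcal U^-_v(0)\bigr)\,d\mu_L>0$ whenever the system is non-flat at level $s$. We argue by contradiction, assuming $h_{\mu_L}(\Phi)=0$.

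The first step uses the absence of focal points. As noted before the statement, this hypothesis makes the unstable Riccati solution positive semidefinite: $\mathcal U^-_v(t)\ge 0$ for all $v$ and $t$, which is the magnetic analogue of Eberlein's argument. To check it, we show that the norm of a normal magnetic Jacobi field vanishing at $t=-T$ is nondecreasing for $t>-T$, so the approximating Riccati solutions $\mathcal U_{v,T}$ are $\ge0$, and then pass to the limit $T\to\infty$ using the Green bundle construction of \cite{assenza2025magnetic}. Positive semidefiniteness gives $\tr\,\mathcal U^-\ge0$. Hence zero entropy forces $\tr\,\mathcal U^-_v(0)=0$ for $\mu_L$-a.e.\ $v$, and therefore $\mathcal U^-_v(0)=0$ almost everywhere. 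By invariance, $\mathcal U^-_{\Phi_t v}(0)=\mathcal U^-_v(t)$, so for a.e.\ $v$ this holds for almost every $t$. Since $t\mapsto\mathcal U^-_v(t)$ is continuous along orbits as a Riccati solution, $\mathcal U^-_v(t)\equiv0$ on a full-measure set of orbits.

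The second step plugs $\mathcal U^-\equiv 0$ into the magnetic Riccati equation
\[
(\mathcal U^-)'+(\mathcal U^-)^2+(M^\Omega_s)_{\Phi_t v}=0 .
\]
This gives $(M^\Omega_s)_w=0$ for $\mu_L$-a.e.\ $w\in\Sigma_s$. The Riccati equation may involve additional terms coming from the twisted covariant derivative; in Assenza's formalism these are absorbed into $M^\Omega_s$ and the twisted derivative, and we use the form established in Section~\ref{Geometric preliminaires}. The magnetic curvature depends continuously on $w$, and $\mu_L$ has full support on $\Sigma_s$. Therefore $M^\Omega_s\equiv0$ on $\Sigma_s$, that is, the system is flat at level $s$, which contradicts the hypothesis.

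The main obstacle is the first step: proving rigorously that no focal points implies $\mathcal U^-\ge0$ in the magnetic setting. Here the twisted derivative replaces $\nabla$, so the monotonicity of $\|J\|$ must be derived from Assenza's Jacobi equation for the orthogonal component. We also need the limiting procedure for $\mathcal U^-$ to preserve semidefiniteness, which holds because the set of positive semidefinite operators is closed. A secondary technical point is the measurability and continuity needed to pass from a.e.\ vanishing of the trace to $\mathcal U^-\equiv0$ along orbits; we handle it by the invariance argument above together with Fubini along flow lines.
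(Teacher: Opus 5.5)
Your proposal is correct and follows the paper's proof essentially step for step. You argue by contradiction through the entropy formula $h_{\mu_L}(\Phi)=\int_{\Sigma_s}\tr\bigl(\mathcal U^-_v(0)\bigr)\,d\mu_L$, get positive semidefiniteness of $\mathcal U^-$ from the no-focal-points lemma (approximating by Jacobi tensors vanishing at a negative time and passing to the limit, exactly as you sketch), then obtain $\mathcal U^-=0$ almost everywhere, and conclude $M^\Omega_s\equiv 0$ from the Riccati equation and continuity. You also fill in two points the paper leaves implicit: the Fubini-along-orbits argument that gives $\mathcal U^-_v(t)\equiv 0$ for all $t$ on almost every orbit before differentiating in the Riccati equation, and the use of full support of $\mu_L$ to pass from almost-everywhere to everywhere vanishing of $M^\Omega_s$.
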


\subsection*{Organization of the article}

In Section 2, we recall the geometric preliminaries of magnetic flows, including the magnetic curvature operator and the magnetic covariant derivative. In Section 3, we introduce the main tools used in the study of magnetic systems without conjugate points, namely the magnetic Green bundles and the divergence of radial Jacobi fields. Finally, in Section 4, we prove our main results.

\section{Geometric preliminaries}\label{Geometric preliminaires}

\subsection{Magnetic curvature}

Given $v\in T_1M$ and $w\in v^\perp:=\{w\in T_{\pi(v)}M: g(v,w)=0\}$, consider the bundle endomorphisms
\[
(R^\Omega_s)_v(w)=s^2R(w,v)v-s(\nabla_w\Omega)(v)+\frac{s}{2}\Big((\nabla_v\Omega)(w)-\langle (\nabla_v\Omega)(w),v\rangle v\Big),
\]
and
\[
(A^\Omega)_v(w)=\frac34\langle w,\Omega(v)\rangle \Omega(v)-\frac14\Omega^2(w)-\frac14\langle \Omega(w),\Omega(v)\rangle v .
\]

The magnetic curvature operator at level $s$ is defined by
\[
(M^\Omega_s)_v(w)=(R^\Omega_s)_v(w)+(A^\Omega)_v(w).
\]

This operator was introduced by Assenza in \cite{assenza2024magnetic}. The magnetic sectional curvature is given by
\[
(\sec^\Omega_s)_v(w)=\big\langle (M^\Omega_s)_v(w),w\big\rangle ,
\qquad w\in v^\perp,\ \|w\|=1,
\]
and the magnetic Ricci curvature by
\[
\Ric^\Omega_s(v):=\frac1{n-1}\tr\bigl(( M_s^\Omega)_v\bigr).
\]

The magnetic sectional curvature generalizes the magnetic curvature previously introduced for magnetic flows on surfaces; see \cite{paternain1993anosov}.

\subsection{Magnetic covariant derivative}
The anisotropic Lorentz force $\widetilde \Omega$ is the skew-adjoint operator given by \begin{align}
    \widetilde \Omega_v(w)=\frac{1}{2}(\langle\Omega(w),v\rangle v+\langle w,v\rangle \Omega(v)+\Omega(w))
\end{align}

This operator induces a derivation along $\gamma$ given by
\begin{align}
   \widetilde{\mathcal D}V   =   \frac{DV}{dt}   -
   \widetilde{\Omega}_{\frac{1}{s}\dot\gamma}(V).
\end{align}
Since $\widetilde{\Omega}_{\frac{1}{s}\dot\gamma}$ is skew-adjoint, the derivation $\widetilde{\mathcal D}$ is compatible with the metric $g$. Moreover, the parallel transport induced by $\widetilde{\mathcal D}$ is an isometry with respect to $g$ (see Lemma~11 in \cite{assenza2024magnetic}). We denote by
\[
P_t:T_{\gamma(0)}M\to T_{\gamma(t)}M
\]
the $\widetilde{\mathcal D}$-parallel transport along $\gamma$.

\subsection{Sasaki twisted decomposition}

For every $v \in TM$ there exists a natural isomorphism
\[
j_v:T_vTM \to T_{\pi(v)}M \oplus T_{\pi(v)}M,\qquad\xi \mapsto (d_v\pi(\xi),K_v(\xi)),
\]
where $K_v:T_vTM \to T_{\pi(v)}M$ is the connection map induced by the connection $\nabla$ (see \cite{paternain1999geodesic}, for details). In this way, we can identify $T_vTM$ with $H_v \oplus V_v$ via $j_v$, where $H_v=\ker(K_v)$ and $V_v=\ker(d_v\pi)$, are called the \textit{horizontal} and \textit{vertical} spaces, respectively.

The \textit{twisted sympletic} for $\omega^\sigma$ on $TM$ is given by
\begin{equation}\label{eq:twisted_symplectic}
\omega_\sigma(\xi,\eta)=\langle \pi_*\xi, K(\eta)\rangle-
\langle K(\xi), \pi_*\eta\rangle-\sigma(\pi_*\xi,\pi_*\eta).
\end{equation}

Consider the \textit{twisted connector } $K^\sigma:TTM\setminus\{0\}\to TM$ given by 
\[
K^\sigma(\xi)=K(\xi)-\widetilde \Omega(\pi_*(\xi)).
\]

The twisted horizontal space is defined by
$H_v^\sigma=\ker\!\left(K_v^\sigma|_{T_v\Sigma_s}\right).$
We also define
$\Theta_v=\{\xi\in T_v\Sigma_s:\pi_*(\xi)\in\mathbb R\,\Omega(v),\,
K(\xi)=0\}$, and $\widehat{ H}_v^\sigma= H_v^\sigma\oplus\Theta_v$.
We denote by $\mathcal Q_v:=T_v\Sigma_s/\mathbb R X^{g,\sigma}(v)$.
The space $\mathcal Q_v$ has dimension $2(n-1)$ and plays the role of the normal space in the geodesic setting. In particular, one can show that $\omega^\sigma$ induces a nondegenerate symplectic form on $\mathcal Q$. Since $X^{g,\sigma}(v)\notin \mathcal V_v=V_v|_{\Sigma_s}$, we identify $\mathcal Q_v\simeq(\widehat{ H}_v^\sigma / \mathbb R X^{g,\sigma}(v))\oplus\mathcal V_v$. These facts were established by Assenza, Reber, and Terek in \cite{assenza2025magnetic}.
For simplicity, we write $\mathcal H_v^\sigma=\widehat{ H}_v^\sigma / \mathbb R X^{g,\sigma}(v)$.

 As in the geodesic setting, we may define an isomorphism
\begin{equation}\label{correspondencia magnetica}
j_v^\sigma:\mathcal Q_v \to v^\perp\oplus v^\perp,
\qquad
\xi \mapsto \bigl([d_v\pi(\xi)]^\perp,K_v^\sigma(\xi)\bigr).
\end{equation}

 The 2-form $\omega^\sigma$ is sympletic restricted to $\mathcal Q_v$ and  $\Psi_t$ is a symplectomorphis on $(\mathcal{Q},\omega^\sigma|_{\mathcal{Q}}).$

\subsection{Magnetic Jacobi fields } Given a $(g,\sigma)$-geodesic $\gamma$, the $(g,\sigma)$-\textit{Jacobi} equation associated to $\gamma_v$ is given by 
\begin{equation}
    \ddot J+R(J,\dot\gamma)\dot\gamma=(\nabla_J\Omega)(\dot\gamma)+\Omega(\dot J)
\end{equation}
where $\cdot$ is the covariant derivative associated to the Levi Civita connection $\nabla$ and $R$ is the Riemannian tensor of curvature. We said that a $(g,\sigma)$-Jacobi field is \textit{normal} if $\langle \dot J(t),\dot \gamma(t)\rangle=0$ for all $t \in \mathbb R.$

Let $J$ be a $(g,\sigma)$-Jacobi field along $\gamma$. Writing
\[
J=f\dot\gamma+J^\perp,\qquad \langle J^\perp,\dot\gamma\rangle=0,
\]
we have
\[
f=\langle J,\dot\gamma\rangle.
\]

Differentiating and using that $\gamma$ is a magnetic geodesic, namely
\[
\nabla_{\dot\gamma}\dot\gamma=\Omega(\dot\gamma),
\]
it follows that
\[
f'=\langle \dot J,\dot\gamma\rangle+\langle J,\Omega(\dot\gamma)\rangle.
\]

In particular, if $J$ is normal, i.e.
\[
\langle \dot J,\dot\gamma\rangle=0,
\]
then
\[
f'=\langle J,\Omega(\dot\gamma)\rangle=\langle J^\perp,\Omega(\dot\gamma)\rangle.
\]

Hence, the tangential component of a normal Jacobi field is completely determined by its normal component. Using the correspondence  (\ref{correspondencia magnetica}), we recover the classical result for geodesic flows:
\begin{proposition}\label{correspoendencia diferencial -campos de Jacobi}
    For every $\xi\in\mathcal{Q}_v$, there exist a linear correspondence such that 
    \[
    \Psi_t(\xi)\simeq(J^\perp_\xi(t),\widetilde DJ_\xi(t))
    \]
    where $J_\xi$ is the unique normal $(g,\sigma)$-Jacobi field such that $J_\xi(0)=[\pi_*\xi]^\perp$ and $\widetilde DJ_\xi(0)=K^\sigma(\xi).$
\end{proposition}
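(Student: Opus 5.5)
We prove Proposition~\ref{correspoendencia diferencial -campos de Jacobi} by the classical route for geodesic flows. We show that every curve of variations through magnetic geodesics yields a magnetic Jacobi field, and then pass to the quotient $\mathcal Q_v$ using the normalization by the flow direction.

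First we establish the unreduced statement. Fix $\xi\in T_v\Sigma_s$ and a curve $z(\epsilon)\subset\Sigma_s$ with $z(0)=v$, $z'(0)=\xi$. Set $f(\epsilon,t)=\pi\circ\Phi_t(z(\epsilon))$. Each $t\mapsto f(\epsilon,t)$ solves \eqref{mag eq}, so differentiating $\nabla_{\partial_t}\partial_t f=\Omega(\partial_t f)$ in $\epsilon$ gives the following. The variation field $J(t)=\partial_\epsilon f(0,t)$ satisfies the $(g,\sigma)$-Jacobi equation. To see this, commute $\nabla_{\partial_\epsilon}$ with $\nabla_{\partial_t}$ at the cost of the curvature term $R(J,\dot\gamma)\dot\gamma$, and use $\nabla_{\partial_\epsilon}(\Omega(\partial_t f))=(\nabla_J\Omega)(\dot\gamma)+\Omega(\dot J)$. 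Moreover, since $\Phi_t(z(\epsilon))=\partial_t f(\epsilon,t)$, we have
\[
d\pi(\Psi_t\xi)=J(t),\qquad K(\Psi_t\xi)=\nabla_{\partial_\epsilon}\partial_tf=\dot J(t).
\]
Since $\|\partial_tf\|\equiv s$, differentiating in $\epsilon$ gives $\langle\dot J,\dot\gamma\rangle=0$, so $J$ is normal. Then
\[
K^\sigma(\Psi_t\xi)=\dot J-\widetilde\Omega_{\dot\gamma/s}(J),
\]
with the conventions for $K^\sigma$ on the energy level.

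Next we show that $\widetilde{\mathcal D}J=\dot J-\widetilde\Omega(J)$ depends only on $J^\perp$ modulo the flow direction. Changing $\xi$ by a multiple of $X^{g,\sigma}(v)$ changes $J$ by a multiple of $\dot\gamma$, which is itself a normal Jacobi field. The tangential part $f\dot\gamma$ of $J$ is determined by $J^\perp$ up to a constant, via $f'=\langle J^\perp,\Omega(\dot\gamma)\rangle$ as computed above. The main check is the following. Using the explicit formula for $\widetilde\Omega$, we verify that $\widetilde{\mathcal D}(f\dot\gamma)$ and the components of $\widetilde{\mathcal D}J$ combine so that the pair $(J^\perp,\widetilde{\mathcal D}J)$ is independent of that constant. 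We also need that this pair agrees with $j^\sigma_{\Phi_t v}$ applied to the class of $\Psi_t\xi$. This is where the specific coefficients $\tfrac12$ in $\widetilde\Omega$ and the correction space $\Theta_v$ enter. I expect this bookkeeping to be the main obstacle. I would first try to prove it by direct computation with $\dot\gamma=s\,u$, $\|u\|=1$, splitting $\widetilde\Omega_u(w)$ into its $u$-component and its $u^\perp$-component.

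Finally, uniqueness and linearity follow from the theory of linear ODEs. Existence of a normal Jacobi field with prescribed $(J(0)^\perp,\widetilde{\mathcal D}J(0))$ is obtained by choosing the tangential constant $f(0)=0$. Such a field is unique, because its normal part satisfies a second order linear equation: the magnetic Jacobi equation of \cite{assenza2024magnetic}, with curvature operator $M^\Omega_s$. Hence $\xi\mapsto J_\xi$ is a linear isomorphism $\mathcal Q_v\to\{\text{normal Jacobi fields}\}/\mathbb R\dot\gamma$. Composing with $j^\sigma_{\Phi_t v}$ gives $\Psi_t(\xi)\simeq(J_\xi^\perp(t),\widetilde{\mathcal D}J_\xi(t))$ for all $t$, as claimed.
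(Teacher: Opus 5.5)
Your argument follows the paper's route. The paper cites Lemma~5.3 of \cite{gouda1997magnetic} for $j(\Psi_t\xi)=(J_\xi(t),\dot J_\xi(t))$, which is exactly what your variation through magnetic geodesics proves, and then applies $j^\sigma$. The ``bookkeeping'' you flag as the main obstacle is immediate: agreement with $j^\sigma_{\Phi_t(v)}$ is already your identity $K^\sigma(\Psi_t\xi)=\dot J-\widetilde\Omega_{\dot\gamma/s}(J)=\widetilde{\mathcal D}J$, and independence of the representative modulo $X^{g,\sigma}(v)$ reduces to $\widetilde{\mathcal D}\dot\gamma=\Omega(\dot\gamma)-\widetilde\Omega_{\dot\gamma/s}(\dot\gamma)=0$, because the two $\tfrac12$-terms give $\widetilde\Omega_u(u)=\Omega(u)$ for unit $u$ (so $\Theta_v$ plays no role).
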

\begin{proof}
    It is known that $j\,(\Psi_t(\xi))=(J_\xi(t),DJ_\xi(t))$,   where $J_\xi(0)=\pi_*(\xi)$ and $DJ_\xi(0)=K(\xi)$ (see Lemma 5.3 in \cite{gouda1997magnetic}).  Using correspondence, we obtain \eqref{correspondencia magnetica} $$
        j^\sigma(\Psi_t(\xi))=(J_\xi(t)^\perp,DJ_\xi(t)-\Omega(J_\xi(t)))
        =(J_\xi(t)^\perp,\widetilde DJ_\xi(t)).
    $$
\end{proof}
In \cite{assenza2025magnetic}, it was proved that if $J$ is a Jacobi field along $\gamma$, then its orthogonal component $J^\perp$ satisfies the reduced Jacobi equation (also called the orthogonal Jacobi equation)
\begin{equation}\label{normal Jacobi eq}
    \widetilde{\mathcal{D}}^2Y+M^\Omega_s(J^{\perp})=0.
\end{equation}
In fact, the argument presented in \cite{assenza2025magnetic} can be reversed, as the following proposition shows.

\begin{proposition} \label{normal jacobi eq reciproca}
Let $Y$ be a normal vector field along $\gamma$ satisfying
\[
\widetilde{\mathcal{D}}^2J^{\perp}+M^\Omega_s(J^{\perp})=0.
\]
Then the vector field
\[
J=f\dot\gamma+J^{\perp},
\]
is a normal Jacobi field along $\gamma$,
where
\[
f'(t)=\frac{1}{s}\left\langle J^{\perp}(t),\Omega(\dot\gamma(t))\right\rangle.
\]

\end{proposition}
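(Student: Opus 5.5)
Given a normal field $Y=J^\perp$ along $\gamma$ solving $\widetilde{\mathcal D}^2Y+M^\Omega_s(Y)=0$, the plan is to build the candidate $J$ and compare it with an honest Jacobi field, not to verify the magnetic Jacobi equation for $J$ term by term. The comparison field is the normal $(g,\sigma)$-Jacobi field with the same initial normal data.

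First define $f$ by integrating $f'=\frac1s\langle Y,\Omega(\dot\gamma)\rangle$ with an arbitrary initial value $f(0)$. Normalizing $\dot\gamma$ to unit length in the earlier computation $f'=\langle J^\perp,\Omega(\dot\gamma)\rangle$ produces the factor $1/s$. The arbitrary constant in $f(0)$ is harmless: $c\,\dot\gamma$ is itself a normal Jacobi field, because the magnetic flow commutes with time shifts and $\langle \ddot\gamma,\dot\gamma\rangle=\langle\Omega(\dot\gamma),\dot\gamma\rangle=0$.

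Next use existence and uniqueness for the Jacobi equation, which is a linear second order ODE. Let $\widetilde J$ be the Jacobi field with $\widetilde J(0)=f(0)\dot\gamma(0)+Y(0)$, and choose $\dot{\widetilde J}(0)$ so that two conditions hold:
\[
\langle \dot{\widetilde J}(0),\dot\gamma(0)\rangle=0, \qquad \widetilde{\mathcal D}\widetilde J^\perp(0)=\widetilde{\mathcal D}Y(0).
\]
This is possible because the map sending initial data to $(\widetilde J^\perp(0),\widetilde{\mathcal D}\widetilde J^\perp(0))$ is onto. Proposition~\ref{correspoendencia diferencial -campos de Jacobi} gives this, since it identifies $\mathcal Q_v\cong v^\perp\oplus v^\perp$ with the normal Jacobi data. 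The plan then has three checks.

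\begin{enumerate}
\item $\widetilde J$ stays normal for all $t$. One differentiates $\langle\dot{\widetilde J},\dot\gamma\rangle$ and uses the Jacobi equation together with skew-symmetry of $\Omega$ and of $\nabla_J\Omega$. This should give
\[
\frac{d}{dt}\langle \dot J,\dot\gamma\rangle=\langle \dot J,\Omega\dot\gamma\rangle-\langle R(J,\dot\gamma)\dot\gamma,\dot\gamma\rangle+\langle(\nabla_J\Omega)\dot\gamma,\dot\gamma\rangle+\langle\Omega\dot J,\dot\gamma\rangle=0 .
\]
So $\langle\dot J,\dot\gamma\rangle$ is constant, hence zero, and the normal Jacobi fields are exactly those with this initial condition.
\item By the result of \cite{assenza2025magnetic} quoted above, $\widetilde J^\perp$ satisfies \eqref{normal Jacobi eq}. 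Along $\gamma$ this is a linear second order ODE for normal fields: $\widetilde{\mathcal D}$ preserves $\dot\gamma^\perp$ by construction of $\widetilde\Omega$. Its solutions are determined by the value and $\widetilde{\mathcal D}$-derivative at $t=0$, so $\widetilde J^\perp=Y$.
\item The tangential part $\widetilde f=\langle\widetilde J,\dot\gamma\rangle/s^2$ satisfies the same first-order equation as $f$, by the computation before the proposition, and $\widetilde f(0)=f(0)$. Hence $\widetilde f=f$, and therefore $J=\widetilde J$ is a normal Jacobi field.
\end{enumerate}

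The main obstacle is the surjectivity or matching step. One must confirm that for every prescribed pair $(Y(0),\widetilde{\mathcal D}Y(0))\in v^\perp\oplus v^\perp$ there is an initial $\dot{\widetilde J}(0)$ realizing $\widetilde{\mathcal D}\widetilde J^\perp(0)=\widetilde{\mathcal D}Y(0)$ while keeping $\langle\dot{\widetilde J}(0),\dot\gamma\rangle=0$. This means writing $\widetilde{\mathcal D}\widetilde J^\perp(0)$ as $\dot{\widetilde J}(0)$ projected to $v^\perp$, plus terms depending only on $\widetilde J(0)$. These come from differentiating the projection and from $\widetilde\Omega$. The resulting affine map from $\dot{\widetilde J}(0)\in v^\perp$ to $v^\perp$ has identity linear part, so it is bijective.

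If this bookkeeping of the $\widetilde\Omega$ terms proves messy, I would instead run a dimension count. The normal Jacobi fields modulo $\mathbb R\dot\gamma$ form a $2(n-1)$-dimensional space, which injects into the solution space of \eqref{normal Jacobi eq} via $J\mapsto J^\perp$. That solution space also has dimension $2(n-1)$, so the map is an isomorphism, which gives the same conclusion.
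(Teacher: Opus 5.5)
Your argument is correct once the normalization constant below is fixed, and it takes a genuinely different route from the paper.

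\textbf{Comparison with the paper.} The paper verifies the Jacobi equation for $J=f\dot\gamma+J^\perp$ directly. It computes $\mathfrak J(f\dot\gamma)=f''\dot\gamma+f'\Omega(\dot\gamma)$ and notes that the tangential part of $\mathfrak J(J)$ cancels by the choice of $f$. It then reads the computation behind Proposition~3.1 of \cite{assenza2025magnetic} as an algebraic identity $(\mathfrak J(J))^\perp=\widetilde{\mathcal D}^2J^\perp+M^\Omega_s(J^\perp)$, valid for every field of this form and not only for Jacobi fields.

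You use only the forward implication (Jacobi field $\Rightarrow$ reduced equation) as a black box and close the argument by uniqueness for linear ODEs. A genuine normal Jacobi field $\widetilde J$ with matching data must have $\widetilde J^\perp=Y$, and then the same tangential coefficient.

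What each route buys:
\begin{itemize}
\item The paper's route yields a stronger pointwise identity for the Jacobi operator. However, it requires going back into the proof in \cite{assenza2025magnetic} to check that nothing beyond the relation defining $f'$ is used there.
\item Your route needs no curvature computation, only the matching of initial data, which you handle correctly. For a normal field $V$ one has $\widetilde{\mathcal D}V=\bigl(\dot V-\tfrac12\Omega(V)\bigr)^\perp$, so $\dot{\widetilde J}(0)\mapsto\widetilde{\mathcal D}\widetilde J^\perp(0)$ is a translation of $v^\perp$.
\end{itemize}
Rely on this direct computation, or on your dimension count, rather than on the $\mathcal Q_v\cong v^\perp\oplus v^\perp$ proposition. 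Its second coordinate is $K^\sigma(\xi)$, which is not literally $\widetilde{\mathcal D}J^\perp(0)$. The dimension count is sound, since $J^\perp\equiv0$ forces $f'\equiv0$. It also shows that $J$ is unique up to adding a multiple of $\dot\gamma$.

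\textbf{The constant.} With $|\dot\gamma|=s$ and $J=f\dot\gamma+J^\perp$, one has $\langle\dot J,\dot\gamma\rangle=s^2f'-\langle J^\perp,\Omega(\dot\gamma)\rangle$. So normality forces $f'=s^{-2}\langle J^\perp,\Omega(\dot\gamma)\rangle$. The factor $1/s$ is correct for the coefficient of the unit vector $\dot\gamma/s$, not of $\dot\gamma$, so your first-paragraph justification of $1/s$ does not hold.

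With $1/s$ as stated, $\langle\dot J,\dot\gamma\rangle=(s-1)\langle J^\perp,\Omega(\dot\gamma)\rangle$, so $J$ fails to be normal in general when $s\neq1$. Your Step~3 in fact uses $\widetilde f=\langle\widetilde J,\dot\gamma\rangle/s^2$, whose derivative is $s^{-2}\langle Y,\Omega(\dot\gamma)\rangle$. The claim that $\widetilde f$ solves the same equation as $f$ therefore holds only after $f$ is defined with $1/s^2$, and then your proof is complete.

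The paper's proof has the same slip. Its cancellation actually gives $\langle\mathfrak J(J),\dot\gamma\rangle=s(s-1)f''$, which vanishes only for $s=1$. This is a defect in the statement's normalization rather than in your method.
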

\begin{proof}
   Define $\mathfrak J(J)=\ddot J+R(\dot\gamma,J)\dot\gamma-(\nabla_J\Omega)(\dot\gamma)-\Omega(\dot J)$. A direct calculation shows that
\[
\mathfrak J(J)=f''\dot\gamma+f'\Omega(\dot\gamma)+\mathfrak J(J^{\perp}),
\]
and $\langle\mathfrak J(J^{\perp}),\dot\gamma\rangle=-f''$. Therefore, $\langle\mathfrak J(J),\dot\gamma\rangle=0$. On the other hand, following Proposition~3.1 in \cite{assenza2025magnetic}, we see that
\[
(\mathfrak J(J))^{\perp}=\widetilde{\mathcal D}^2J^{\perp}+M^\Omega_s(J^{\perp})=0.
\]
it follows that $\mathfrak J(J)=0$. Therefore, $J$ is a $(g,\sigma)$-Jacobi field.
\end{proof}
\section{Magnetic systems without Conjugate points} Given a magnetic geodesic $\gamma$, we say that two points
$\gamma(a)$ and $\gamma(b)$ are \textit{conjugate} if there exists a nontrivial
normal Jacobi field $J$ along $\gamma$ such that $J(a)=0$ and $J(b)$ is parallel
to $\dot\gamma(b)$. We say that a magnetic system $(M,g,\sigma)$ has no conjugate
points on the energy level $\Sigma_s$ if every magnetic geodesic on $\Sigma_s$
has no conjugate points.

It is well known that magnetic systems with nonpositive magnetic sectional
curvature have no conjugate points. One of the most important characteristics of
flows without conjugate points is the existence of the fiber bundles called
\textit{Green bundles}, which we describe in this section. The existence of such
bundles was proved by Green for geodesic flows \cite{green1958theorem}, by Foulon
\cite{foulon1992estimation} for Finsler metrics, by Contreras and Iturriaga for
Hamiltonian flows, and more recently by Assenza, Reber, and Terek
\cite{assenza2025magnetic} for magnetic flows.

For every $v \in \Sigma_s$ and $T\in\mathbb R$, we denote
$E_v(T):=\Psi_{-T}(\mathcal V_{\Phi_T(v)})$. Since $\mathcal V_v$ is $\omega^\sigma$-Lagrangian and $\Psi_t$ preserves $\omega^\sigma$, then
$E_v(T)$ is also Lagrangian. Moreover, the assumption of no conjugate points implies that $E_v(T)\cap \mathcal V_v=\{0\}$. Therefore, $E_v(T)$ is the graph of a symmetric operator $S_v(T):\mathcal H_v^\sigma \to \mathcal V_v$.

\begin{proposition}\label{Green properties}\cite{assenza2025magnetic}
For each $v \in \Sigma_s$, there exist two invariant, $n-1$ dimensional subspaces $E^{\sigma,+}_v$ and $E^{\sigma,-}_v$ of $\mathcal{Q}_v$, defined by
\begin{align*}
    E^{\sigma,+}_v &= \lim_{t \to +\infty} \Psi_{-t}(\mathcal{V}_{\phi_t(v)}), \\
     E^{\sigma,-}_v &= \lim_{t \to +\infty} \Psi_{t}(\mathcal{V}_{\phi_{-t}(v)}),
\end{align*}
where $\mathcal{V}_v=V_v\cap\mathcal{Q}_v$ is the vertical subspace at $v$.The distributions $E^{\sigma,+}_v$ and $E^{\sigma,-}_v$, called the \emph{stable} and \emph{unstable Green bundles}, respectively, satisfy the following properties:

\begin{enumerate}
    \item They are measurable, transverse to the vertical subbundle $\mathcal{V}_v$, and transverse to the geodesic vector field.
    \item There exist linear operators $\mathcal  S^+_v : {\mathcal H}_v^\sigma\to \mathcal V_v$, $\mathcal S^-_v :{\mathcal H}_v^\sigma \to \mathcal V_v$, such that $ E^{\sigma,+}_v $, $ E^{\sigma,-}_v $ are, respectively, the graphs of $\mathcal  S^+_v$, $\mathcal  S^-_v$. Such operators give rise to solutions of a well-known Riccati equation.
\end{enumerate}
\end{proposition}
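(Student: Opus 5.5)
The plan is to follow the classical Green--Eberlein construction, transported to the magnetic setting through the reduced Jacobi equation \eqref{normal Jacobi eq} and the correspondence of Proposition~\ref{correspoendencia diferencial -campos de Jacobi}. Fix $v\in\Sigma_s$ and work in the $\widetilde{\mathcal D}$-parallel frame along $\gamma_v$ given by $P_t$. Since $P_t$ is an isometry, \eqref{normal Jacobi eq} becomes a matrix equation $Y''+\mathcal M(t)Y=0$ on $v^\perp\cong\mathbb R^{n-1}$, with $\mathcal M(t)=P_t^{-1}(M^\Omega_s)_{\dot\gamma(t)}P_t$ symmetric. Symmetry of $\mathcal M$ is exactly what makes $\omega^\sigma$ on $\mathcal Q_v$ correspond to the standard Wronskian pairing on $v^\perp\oplus v^\perp$ under $j^\sigma_v$.

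For $T>0$, let $Y_T$ be the matrix solution with $Y_T(T)=0$ and $Y_T(0)=I$. It exists because no conjugate points makes $Y(0)\mapsto Y(T)$ injective on solutions vanishing at $T$. By Proposition~\ref{normal jacobi eq reciproca}, its columns come from normal Jacobi fields. Through Proposition~\ref{correspoendencia diferencial -campos de Jacobi}, $E_v(T)=\Psi_{-T}(\mathcal V_{\Phi_T v})$ is the graph of $S_v(T)=Y_T'(0)$, which is symmetric because $E_v(T)$ is Lagrangian.

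The key step is monotonicity. For $T_1<T_2$, the Wronskian of $Y_{T_1}$ and $Y_{T_2}$ on $[0,T_1]$ shows that $S_v(T_2)-S_v(T_1)$ is negative definite, the same sign convention as in the Riemannian case. For boundedness, compare with the solutions $Z$ satisfying $Z(-1)=0$, $Z(0)=I$; no conjugate points on $[-1,T]$ gives $S_v(T)>Z'(0)$ for all $T$, a uniform lower bound. A monotone family of symmetric matrices that is bounded below converges. So $\mathcal S^+_v:=\lim_{T\to+\infty}S_v(T)$ exists, and $E^{\sigma,+}_v$ is its graph. The unstable bundle $E^{\sigma,-}_v$ comes from the symmetric construction with $T\to-\infty$, giving an increasing family bounded above.

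The remaining properties then follow in order.
\begin{enumerate}
\item Invariance: $\Psi_t E_v(T)=E_{\Phi_t v}(T-t)$, and we pass to the limit.
\item Dimension $n-1$, transversality to $\mathcal V_v$, and the graph property all hold by construction.
\item Transversality to $X^{g,\sigma}$ holds trivially, since we work in the quotient $\mathcal Q_v$.
\item Measurability: $v\mapsto S_v(T)$ is continuous, and a pointwise limit of continuous maps is measurable.
\item Riccati equation: $U(t)=Y'(t)Y(t)^{-1}$, with $Y$ the limiting solution having $Y(0)=I$ and $Y'(0)=\mathcal S^\pm_v$, satisfies $U'+U^2+\mathcal M=0$. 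It is well defined because limiting solutions never vanish, again by the comparison argument. This gives $\mathcal U^\pm_{\Phi_t v}(0)=\mathcal S^\pm_{\Phi_t v}$.
\end{enumerate}

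I expect the main obstacle to be checking that the magnetic correspondence identifies $\omega^\sigma|_{\mathcal Q}$ with the Wronskian. The extra tangential component $f\dot\gamma$, the $\Theta_v$ direction, and the $-\sigma(\pi_*\cdot,\pi_*\cdot)$ term might contribute cross terms. I would handle this by direct computation, using that $f$ is determined by $J^\perp$ (Proposition~\ref{normal jacobi eq reciproca}) and that $\widetilde\Omega$ is skew-adjoint, so the $\sigma$ term cancels against the $\widetilde\Omega$ correction in $K^\sigma$.
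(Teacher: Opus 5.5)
Your route is Green's construction for the reduced equation $Y''+\mathcal M Y=0$ in the $\widetilde{\mathcal D}$-parallel frame, transferred to $\mathcal Q_v$ via Proposition~\ref{correspoendencia diferencial -campos de Jacobi}, and it is the right one. The paper gives no proof of this statement; it imports it from Assenza--Reber--Terek. The objects it uses in Section~3 ($Y_T=B-AA(T)^{-1}B(T)$, and $\dot Y_T(0)=\mathcal S_v(T)\to\mathcal S^+_v$ by their Lemma~4.4) are exactly yours.

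However, the two inequalities that drive your convergence step are reversed. In the flat case $\mathcal M\equiv 0$ one has $Y_T(t)=(1-t/T)I$, so $S_v(T)=-\frac1T I$. This \emph{increases} with $T$, and it lies \emph{below} $Z'(0)=I$ for $Z(t)=(1+t)I$. Hence ``$S_v(T_2)-S_v(T_1)$ is negative definite'' and ``$S_v(T)>Z'(0)$'' are both false, and no Wronskian computation can produce them.

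The correct versions come from the index form $I_{[a,b]}(V,V)=\int_a^b\bigl(\|V'\|^2-\langle\mathcal M V,V\rangle\bigr)\,dt$. It is positive definite on fields vanishing at $a$ and $b$, by absence of conjugate points (through Proposition~\ref{normal jacobi eq reciproca}), so solutions strictly minimize it among fields with the same endpoint values. Integration by parts gives $I_{[0,T]}(Y_Tx,Y_Tx)=-\langle S_v(T)x,x\rangle$.
\begin{itemize}
\item Comparing $Y_{T_2}x$ with the broken field equal to $Y_{T_1}x$ on $[0,T_1]$ and $0$ on $[T_1,T_2]$ gives $S_v(T_1)<S_v(T_2)$.
\item The broken field equal to $Zx$ on $[-1,0]$ and $Y_Tx$ on $[0,T]$ has index $\langle (Z'(0)-S_v(T))x,x\rangle>0$.
\end{itemize}
So $S_v(T)$ increases to $\mathcal S^+_v$ and is bounded above. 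As $T\to-\infty$ it decreases and is bounded below by $Z'(0)$ for the solution with $Z(1)=0$, $Z(0)=I$. Because both of your signs are flipped together, convergence and everything you derive from it survive once they are corrected.

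Three smaller remarks.
\begin{itemize}
\item The obstacle you single out is not needed for this statement. $S_v(T)$ is symmetric because $W(Y_T,Y_T)$ is constant and vanishes at $t=T$. All the monotonicity and convergence take place at the level of the reduced equation. The only link with $\Psi_t$ is therefore Proposition~\ref{correspoendencia diferencial -campos de Jacobi}, whose second component should be read as $[\widetilde{\mathcal D}J]^\perp=\widetilde{\mathcal D}J^\perp$. Indeed $\widetilde{\mathcal D}\dot\gamma=0$, so $\widetilde{\mathcal D}J=f'\dot\gamma+\widetilde{\mathcal D}J^\perp$.
\item Symmetry of $\mathcal M$ gives conservation of the Wronskian. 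It does not give the pointwise identification of $\omega^\sigma$ with the standard form, which is a separate linear-algebra computation.
\item Invertibility of the limiting $Y^\pm_v(t)$ for all $t$ follows directly from invariance together with $E^{\sigma,\pm}_w\cap\mathcal V_w=\{0\}$ at every $w$; no comparison argument is needed. Existence of $Y_T$ uses injectivity of $Y\mapsto Y(0)$ on solutions with $Y(T)=0$, not of $Y(0)\mapsto Y(T)$ as you wrote.
\end{itemize}
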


\begin{definition} Given a magnetic geodesic $\gamma_v$, an application $\mathcal J(t):v^\perp \to v ^\perp$ is called a $(g,\sigma)$--\textit{Jacobi Tensor} if it satisfies the equation:
\begin{equation}\label{Jacobi eq}
    \ddot {\mathcal J}+\mathcal M  \mathcal J=0
\end{equation}
    where $\mathcal M=P_{-t}\circ M_s^\Omega \circ P_t$ and $\dot {\mathcal J}w=\frac{d}{dt}(\mathcal J w).$
\end{definition}

\subsection{Ricatti equation}
Whenever a solution $\mathcal J$ of \eqref{Jacobi eq} is invertible on an interval $(a,b)$, the matrix
$U(t)=\dot {\mathcal J}(t){\mathcal J}(t)^{-1}$ satisfies the $(g,\sigma)$-\textit{Riccati equation}
 \begin{equation}\label{Riccati eq}
     \dot U(t)+U^2(t)+\mathcal M(t)=0,
 \end{equation}
for $t\in (a,b).$ In fact,
\begin{align*}
    \dot U(t)=&\ddot {\mathcal J}(t){\mathcal J}^{-1}(t)-\dot{\mathcal J}(t)J^{-1}(t)\dot {\mathcal J}(t)J^{-1}(t)\\
    =&-\mathcal M(t){\mathcal J}^{-1}(t){\mathcal J}(t)+U^2(t)\\
    =&-\mathcal M(t)-U^2(t).
\end{align*}
Given two solutions ${\mathcal J}_1,{\mathcal J}_2$ of the matricial equation \eqref{Jacobi eq}, the \textit{Wronskian} is defined by
\[
W({\mathcal J}_1(t),{\mathcal J}_2(t)):=(\dot{\mathcal J}_1(t))^T{\mathcal J}_2(t)-{\mathcal J}_1(t)^T\dot{\mathcal J}_2(t).
\]
Since $\mathcal M(t)$ is symmetric, a standard argument implies that $W({\mathcal J}_1(t),{\mathcal J}_2(t))$ is constant.

Equation~\eqref{Jacobi eq} allows us to recover several classical results due to Green \cite{green1958theorem}, Eberlein \cite{eberlein1973geodesic}, and others. As a consequence, we obtain the following result.

\begin{proposition}\label{Riccati limitado} Let $\mathcal J$ a Jacobi Tensor and $U(t):=\dot{\mathcal J}(t)\mathcal J(t)^{-1}$. Suppose that $U(t)$ is defined for every $t>0$. Then:

\begin{enumerate}

\item The operator $U(t)$ is symmetric for every $t>0$ if and only if the Wronskian 
\[
W({\mathcal J}(t),{\mathcal J}(t))=0
\]

\item If $U(t)$ is symmetric, then $U$ is uniformly bounded in $[1,\infty)$.

\end{enumerate}
\end{proposition}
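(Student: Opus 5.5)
For part (1) I will use the symmetry of $\mathcal M(t)$, which holds because $P_t$ is an isometry and $M^\Omega_s$ is symmetric. The Wronskian $W(\mathcal J,\mathcal J)=\dot{\mathcal J}^T\mathcal J-\mathcal J^T\dot{\mathcal J}$ is constant in $t$. On any interval where $\mathcal J$ is invertible,
\[
\mathcal J^T\bigl(U-U^T\bigr)\mathcal J=\mathcal J^T\dot{\mathcal J}-\dot{\mathcal J}^T\mathcal J=-W(\mathcal J,\mathcal J).
\]
Since $\mathcal J(t)$ is invertible for every $t>0$, $U(t)$ is symmetric if and only if $W(\mathcal J(t),\mathcal J(t))=0$. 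Constancy of $W$ makes the condition independent of $t$, so symmetry at a single time already gives it for all $t>0$.

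For part (2) I follow Green's classical argument, adapted to the Riccati equation \eqref{Riccati eq}. By compactness of $M$ and the smoothness of $g$ and $\sigma$, the operators $M^\Omega_s$ are uniformly bounded on the unit level, so $\|\mathcal M(t)\|\le k^2$ for all $t$, for some constant $k>0$. The bound comes from the symmetric Riccati equation, compared with the scalar equation $\dot u+u^2-k^2=0$ (upper bound) and $\dot u+u^2+k^2=0$ (lower bound).

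\emph{Upper bound.} Fix a unit vector $w$ and set $u(t)=\langle U(t)w,w\rangle$. Symmetry gives $\langle U^2w,w\rangle=\|Uw\|^2\ge u^2$, so $\dot u\le -u^2+k^2$. If $u(t_0)>k\coth(k(t_0-t_1))$ for some $t_1<t_0$, comparison with the solution $k\coth(k(t-t_1))$ run backwards forces $u$ to blow up to $+\infty$ before time $t_1$. Taking $t_1=0$ (or any point of $(0,t_0)$) gives a contradiction, since $U$ is defined on all of $(0,\infty)$. Hence $\langle U(t)w,w\rangle\le k\coth(kt)\le k\coth k$ for $t\ge 1$.

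\emph{Lower bound.} This is the main obstacle, because the one-sided inequality runs the wrong way here. Setting $\lambda(t)$ to be the smallest eigenvalue of $U(t)$, I get, in the sense of Dini derivatives, $\dot\lambda\le -\lambda^2+k^2$, using $\langle U^2w,w\rangle=\lambda^2$ for an eigenvector $w$. If $\lambda(t_0)<-k$, the comparison solution $-k\coth(k(t_2-t))$ reaches $-\infty$ at a finite time $t_2>t_0$, and so $\lambda$ must diverge before $t_2$. This contradicts the global definition of $U$ forward in time. Quantitatively I expect $\lambda(t)\ge -k$, and in any case $\ge -k\coth(k\cdot)$, since the solution must survive for arbitrarily long times.

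Combining the two bounds, $U(t)$ is symmetric with spectrum in $[-k,k\coth k]$ for $t\ge1$, so $\|U(t)\|\le k\coth k$ uniformly on $[1,\infty)$. The only delicate point is justifying the differential inequality for the eigenvalue functions, which are only Lipschitz. I would handle it either by working with $\langle U(t)w,w\rangle$ for a fixed eigenvector $w$ of $U(t_0)$ at each time $t_0$, or by citing the standard Riccati comparison theorem of Eschenburg--Heintze, which applies verbatim because $\mathcal M$ is symmetric and bounded.
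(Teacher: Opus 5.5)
Your proof is correct and is exactly the classical Green--Eberlein argument that the paper invokes by citation without writing it out: for (1), the identity $\mathcal J^T(U-U^T)\mathcal J=-W(\mathcal J,\mathcal J)$ together with constancy of $W$; for (2), comparison of $u=\langle Uw,w\rangle$ with the $\pm k\coth$ solutions of $\dot u=k^2-u^2$, which gives $-k\le U(t)\le k\coth(kt)$. Two cosmetic points. First, $t_1=0$ alone yields no contradiction, because blow-up at $t=0$ is allowed (e.g.\ for $\dot A A^{-1}$); take $t_1\in(0,t_0)$ and let $t_1\to0$, as your parenthetical suggests. Second, the lower bound is no real obstacle: the same inequality $\dot u\le k^2-u^2$ for a fixed unit $w$ with $u(t_0)<-k$, run forward in time, already forces $u\to-\infty$ in finite time, so eigenvalue functions and Dini derivatives are unnecessary.
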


\subsection{Stable and unstable Jacobi tensors} The isomorphism \eqref{correspondencia magnetica} allows us to identify
\[
\mathcal H_v^\sigma \simeq v^\perp
\qquad\text{and}\qquad
\mathcal V_v \simeq v^\perp.
\]

Recall that $E_v^{\sigma,+}=\graph(\mathcal S_v^+)$ and let
\[
\xi_w\in E_v^{\sigma,+}
\qquad\text{with}\qquad
\xi_w=(w,\mathcal S_v^+w),
\quad w\in v^\perp.
\]
Consider the normal magnetic Jacobi field $J_w$ along $\gamma_v$, with initial data
\[
J_w^\perp(0)=w,
\qquad
\widetilde D J_w(0)=\mathcal S_v^+w.
\]
We then define
\[
Y_v^+(t)w:=P_{-t}\bigl(J_w^\perp(t)\bigr).
\]

Since $P_t$ is $\widetilde D$-parallel,
\[
\dot Y_v^+(t)w
=
P_{-t}\bigl(\widetilde D J_w^\perp(t)\bigr).
\]
The vector $\Psi_t(\xi_w)$ belongs to $E_{\Phi_t(v)}^{\sigma,+}$; therefore, by Proposition~\ref{correspoendencia diferencial -campos de Jacobi},
\[
\bigl(J_w^\perp(t),\,\widetilde D J_w(t)\bigr)
\in
\graph\bigl(\mathcal S_{\Phi_t(v)}^+\bigr),
\]
and consequently
\[
\widetilde D J_w(t)
=
\mathcal S_{\Phi_t(v)}^+\bigl(J_w^\perp(t)\bigr).
\]
Applying $P_{-t}$ to this identity yields
\[
\dot Y_v^+(t)w=(P_{-t}\circ \mathcal S_{\Phi_t(v)}^+\circ P_t)\bigl(Y_v^+(t)w\bigr).
\]
Since this holds for every $w\in v^\perp$, we obtain the matrix relation
\[
\dot Y_v^+(t) = (P_{-t}\circ \mathcal S_{\Phi_t(v)}^+\circ P_t)\,Y_v^+(t).
\]
Since the flow has no conjugate points, every nonzero vector $
(w,S_v^+w)\in E^{\sigma,+}(v)$ determines a magnetic Jacobi field $J_w^+$. Therefore, $Y_v(t)w=P_{-t}\bigl((J_w^+)^\perp(t)\bigr)\neq 0$ if $w\neq 0$. Hence $Y^+_v(t)$ is injective. Since $Y^+_v(t):v^\perp\to v^\perp$ is a linear map between vector spaces of the same finite dimension, it follows that $Y^+_v(t)$ is invertible, 
\[
\dot Y_v^+(t)(Y^+_v(t))^{-1}=(P_{-t}\circ \mathcal S_{\Phi_t(v)}^+\circ P_t)
\]
In particular, 
\[
Y^+_v(0)=Id \,\,\mbox{and}\,\,\, \dot Y^+_v(0)=\mathcal S^+_v.
\]

The same argument can be used to construct an operator $Y_v^-$ associated with $E_v^{\sigma,-}$.

\begin{definition}
The operators $Y_v^+$ and $Y_v^-$ are called the \textit{stable and unstable Jacobi--Green operators}, respectively. The operators $\mathcal U_v^+:=\dot Y_v^+(Y_v^+)^{-1}$ and $\mathcal U_v^-:=\dot Y_v^-(Y_v^-)^{-1}$ are called the \textit{stable and unstable Riccati operators}, respectively. 
\end{definition}
The correspondences $v\mapsto \mathcal U_v^\pm(0)$ are measurable. Moreover, they satisfy 
\[
\mathcal U_v^\pm(t)=P_{-t}\circ \mathcal U_{\Phi_t(v)}^\pm(0)\circ P_t.
\]

Note that, as an immediate consequence of Proposition \ref{Riccati limitado}, $\mathcal U_v^+$ and $\mathcal U_v^-$ are symmetric and uniformly bounded for all $t\in\mathbb{R}$.

\subsection{Divergence of magnetic Jacobi rays}
\begin{itemize}[label=$\bullet$]
\item Let $A$ be the operator such that $A(t):v^\perp\to v^\perp$ and $A(t)w=P_{-t}(J^\perp_w(t))$ where $J_w(0)=0$ and $\widetilde DJ_w(0)=w$. Then $A$ is a solution of \eqref{Jacobi eq} satisfying $A(0)=0$ and $\dot A(0)=Id$. Since $(M,g,\sigma)$ has no conjugate points, $A$ is invertible on $(0,+\infty)$. Indeed, if $A(T)w=0$ for some $T>0$ and $w\neq0$, then the vector field $J_o(t)=P_t(A(t)w)$ would satisfy $J_o(0)=0$ and $J_o(T)=0$. Then, the vector field $J=f\dot\gamma+J_o$, where $f'=\frac{1}{s}\langle J_o,\Omega(\dot \gamma)\rangle$ and $f(0)=0$, would be a normal $(g, \sigma)$-Jacobi field by Proposition \ref{normal jacobi eq reciproca} with conjugate points, which is a contradiction. Therefore, $A(t)$ is invertible for all $t>0$.

\item Let $B(t):v^\perp\to v^\perp$ be the operator defined by
$B(t)w=P_{-t}(J_w^\perp(t))$, where $J_w$ is the solution satisfying
$J_w(0)=w$ and $\widetilde D J_w(0)=0$. Then $B$ is a solution of
\eqref{Jacobi eq} and satisfies
$B(0)=Id$ and $\dot B(0)=0$.

\item Let $Y_{T}(t):v^\perp \to v^\perp$ be such that $Y_{T}(t)=B(t)-A(t)A(T)^{-1}B(T)$. Then $Y_{T}(0)=Id$ and $Y_{T}(T)=0$. Moreover, $Y_{T}$ satisfies \eqref{Jacobi eq} and $Y_{T}(t)w=P_t(J_{w,T}(t)^\perp)$. Where $J_{w,T}$ is the Jacobi field satisfying $J_{w,T}(0)=w$ and $(J_{w,T}(T))^\perp=0.$ Then $J_{w,T}\in E_v^\sigma(T)$ via the correspondence \eqref{correspondencia magnetica} and hence $\widetilde DJ_{w,T}(0)=\mathcal S_v(T)w$. Therefore, $\dot Y_T(0)=\widetilde DJ_{w,T}(0)^\perp=\widetilde DJ_{w,T}(0)=\mathcal{S}_v(T)w.$ Then Lemma 4.4 in \cite{assenza2025magnetic} implies that $Y_T(0)\to Y^+(0)$ as $T\to \infty$ and as $Y_T\to Y$ as $T\to \infty.$
\end{itemize}

\begin{proposition}\label{Jacobi divergencia} For all $R > 0$ there exists $T > 0$ such that $|A(t)x| > R|x|$ for all $|t |> T$ and all $x \in T_v M \setminus \{0\}$.
    
\end{proposition}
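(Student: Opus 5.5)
We adapt Green's classical argument for the divergence of Jacobi rays (as in Eberlein's treatment) to the magnetic Jacobi tensors of Section~3. The key objects are the tensor $A$, the Jacobi--Green operators $Y_T$, and the Wronskian identity. Since $\mathcal M(t)$ is symmetric and the Wronskian is constant, we have $W(A,A)=\dot A(0)^TA(0)-A(0)^T\dot A(0)=0$. Hence $U_A(t)=\dot A(t)A(t)^{-1}$ is symmetric on $(0,\infty)$, and by Proposition~\ref{Riccati limitado}, $U_A$ is uniformly bounded on $[1,\infty)$.

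\textbf{Step 1: an integral representation.} Since $A$ and $Y_T$ are Jacobi tensors with vanishing self-Wronskian, and $W(A,Y_T)$ is constant, evaluating at $t=0$ gives $W(A,Y_T)=\dot A(0)^TY_T(0)-A(0)^T\dot Y_T(0)=Id$. The standard reduction of order then yields, for $t\ge t_0>0$,
\[
B_{t_0}(t):=A(t)\int_{t_0}^{t}(A^TA)^{-1}(r)\,dr,
\]
which is again a Jacobi tensor, and $A(t)A(T)^{-1}B(T)$ can be compared with it. Concretely, we write
\[
Y_T(t)=A(t)\int_t^T (A^TA)^{-1}(r)\,dr\quad (0<t\le T).
\]
This holds because both sides are Jacobi tensors on $(0,T]$ vanishing at $T$, and their Wronskians with $A$ agree. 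Letting $T\to\infty$ and using the convergence $Y_T\to Y^+$ noted above (Lemma~4.4 in \cite{assenza2025magnetic}), we conclude that
\[
\int_1^\infty (A^TA)^{-1}(r)\,dr
\]
converges. In particular $\int_1^\infty |A(r)x|^{-2}\,dr$-type quantities are finite. More precisely, $\int_1^\infty \langle (A^TA)^{-1}(r)y,y\rangle\,dr<\infty$ for every $y$, and therefore $\|(A^TA)^{-1}(r)\|$ is integrable, since it is dominated by the trace.

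\textbf{Step 2: from integrability to divergence.} We argue by contradiction. Suppose there are $R>0$, $t_k\to\infty$, and unit vectors $x_k$ with $|A(t_k)x_k|\le R$. Boundedness of $U_A$ on $[1,\infty)$, say $\|U_A\|\le c$, gives
\[
\Big|\tfrac{d}{dt}\log|A(t)x|\Big|\le c .
\]
Hence $|A(t)x_k|\le Re^{c}$ for $t\in[t_k-1,t_k+1]$. Setting $y_k=A(t)x_k$ and normalizing, we get $\|(A^TA)^{-1}(t)\|\ge\|A(t)\|^{-2}$. Better still, we use $\|A(t)^{-1}\|\ge |x_k|/|A(t)x_k|\ge R^{-1}e^{-c}$ on these intervals. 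Since $\|(A^TA)^{-1}\|=\|A^{-1}\|^2$, the integrand is $\ge R^{-2}e^{-2c}$ on infinitely many disjoint intervals of length $2$, which contradicts Step 1. The same argument for $t<0$ uses $Y^-$ and the time-reversed operators. The uniformity in $x$ comes from the operator-norm formulation, since $|A(t)x|\ge \|A(t)^{-1}\|^{-1}|x|$.

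\textbf{Main obstacle.} The delicate point is Step 1, namely identifying $Y_T$ with the integral formula and justifying that $Y_T(1)$ converges to a finite limit. This is what forces the integral to converge. Here the only inputs from the paper are the constancy of the Wronskian (which needs $\mathcal M$ symmetric), the invertibility of $A$ on $(0,\infty)$ from the absence of conjugate points, and the convergence $Y_T\to Y^+$ from Lemma~4.4 in \cite{assenza2025magnetic}. I would check the Wronskian normalization carefully, since $W(A,Y_T)$ must equal $\pm Id$ and the sign conventions matter for positivity of $\int (A^TA)^{-1}$.
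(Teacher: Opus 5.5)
Your proof is correct. Its first half matches the paper's. Both rely on the reduction-of-order formula $Y_T(t)=A(t)\int_t^T (A^TA)^{-1}(r)\,dr$, which comes from $W(A,A)=0$. Both also use the convergence $Y_T\to Y^+$ to make $D(s)=\int_s^\infty (A^TA)^{-1}(r)\,dr$ finite. You justify this explicitly through $Y_T(1)\to Y^+(1)$ and the invertibility of $A(1)$, whereas the paper uses it tacitly.

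The two routes differ in how pointwise decay of $\|A^{-1}(s)\|$ is extracted. The paper passes to the limit $Y^+=AD$. Then $\mathcal U^+-U_A$ equals $(A^{-1})^TD^{-1}A^{-1}$ up to sign, where $U_A=\dot A A^{-1}$ is the paper's $W$. Using the boundedness of \emph{both} Riccati solutions $\mathcal U^+$ and $U_A$ on $[1,\infty)$, it obtains the direct estimate $\|A^{-1}(s)\|^2\le k\,\|D(s)\|$, which tends to $0$ as the tail of a convergent integral.

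You use only the bound on $U_A$. It makes $t\mapsto\log|A(t)x|$ uniformly Lipschitz on $[1,\infty)$, hence also $\log\|A^{-1}(t)\|$ as a minimum over unit $x$. A positive integrable function whose logarithm is Lipschitz must tend to $0$, so integrability of $\|(A^TA)^{-1}\|=\|A^{-1}\|^2$ finishes the argument.

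What each buys: the paper's version is shorter and ties the decay directly to the tail $D(s)$, but it needs the bound on $\mathcal U^+$ and the identity linking $\mathcal U^+$ to $U_A$. Yours needs one fewer Riccati bound and no such identity. Although you phrase it by contradiction, it also gives an explicit bound, $\|A^{-1}(t)\|^2\le\tfrac{e^{2c}}{2}\int_{t-1}^{t+1}\|A^{-1}(r)\|^2\,dr$ for $t\ge 2$.
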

\begin{proof}
    A standard Wronskian argument shows that
    \[
    Y_{v,t}(s)=A(s)\int_s^t A^{-1}(u)(A^{-1}(u))^* \, du,
    \]
 for all $0<s<t$. Denote $D(s):=\int_s^{+\infty} A^{-1}(u)(A^{-1}(u))^* \, du$, $W(s):= \dot A(s)(A^{-1}(s))$ and $\mathcal U^+(s)=\dot Y^+(s)(Y^+)^{-1}(s)$, for sure, $W$ and $U$ satisfies the Riccati equation (\ref{Riccati eq}). Moreover, one can verify that
\[
\mathcal U^+(s)-W(s)=(A^{-1}(s))^TD^{-1}(s)A^{-1}(s).
\]

Since $\mathcal U^+$ and $W$ are uniformly bounded by Proposition \ref{Riccati limitado} in $[1,+\infty)$, there exists a constant $k>0$ such that for every $x \in \mathbb R$ with $\|x\|=1$,
\[
\left|\left\langle (A^{-1}(s))^TD^{-1}(s)A^{-1}(s)x,x\right\rangle\right|=\left|\left\langle D^{-1}(s)A^{-1}(s)x,A^{-1}(s)x\right\rangle\right|<k.
\]

Let $\lambda_{\max}(s)$ denote the eigenvalue of $D(s)$ with largest modulus. Since $D(s)$ is positive definite, we obtain
\[
\left\langle D^{-1}(s)A^{-1}(s)x,A^{-1}(s)x\right\rangle \geq
\lambda_{\max}(s)^{-1}\|A^{-1}(s)x\|^2.
\]
Hence,
\[
\|A^{-1}(s)x\|^2<k\,\lambda_{\max}(s).
\]

Because $D(s)$ is symmetric and positive definite,
\[
\lambda_{\max}(s)=\|D(s)\|,
\]
and therefore
\[
\|A^{-1}(s)x\|^2 < k\|D(s)\|.
\]

Finally, since $D(s)\to 0$ as $s\to +\infty$, it follows that
\[
\|A^{-1}(s)x\|\to 0\qquad \text{as } s\to +\infty.
\]
Then $\|A(s)x\|\to +\infty$ as $s\to +\infty$. Replacing $S^+$ by $S^-$ in the above construction, we obtain the divergence formula in backward time.
\end{proof}

\section{Proof of main results}

The proof of the Freire-Mañé magnetic formula is based on Oseledets' theorem \cite{oseledec1968multiplicative}. Applied to $\Psi_t$ on $\Sigma_s$, it provides a measurable decomposition of $Q_v$ into a complete measure set. This decomposition will be used to compare the unstable magnetic Green bundle with Oseledets subspaces.
Recall that, for every $\Phi_t$-invariant probability measure $\mu$, Oseledets' theorem gives a full $\mu$-measure set $\mathcal O\subset\Sigma_s$ such that for every $v\in\mathcal O$ there exists a measurable splitting
\[
Q_v = E^s(v)\oplus E^c(v)\oplus E^u(v),
\]
where the Lyapunov exponents on $E^s(v)$ are negative, on $E^c(v)$ are zero, and on $E^u(v)$ are positive. Our goal is to demonstrate that the unstable Green bundle is between the unstable and central Oseledets subspaces.

\begin{proposition}\label{osedelecs}
Let $\mu$ be an invariant probability measure for $\Phi_t$, and let
\[
Q_v=E^s(v)\oplus E^c(v)\oplus E^u(v)
\]
be the Oseledec decomposition of $\Psi_t$ on a full measure set.

Then, for $\mu$-almost every $v\in\Sigma_s$,
\[
E^u(v)\subset E^{\sigma,-}(v)\subset E^u(v)\oplus E^c(v).
\]
In particular, the limit
\[
\lim_{T\to+\infty}\frac1T\log\left|\det\!\left(\Psi_T\big|_{E^{\sigma,-}(v)}\right)\right|
\]
exists for $\mu$-almost every $v$.
\end{proposition}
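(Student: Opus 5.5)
We follow the classical Freire–Mañé scheme for geodesic flows, adapted to the quotient $\mathcal Q_v$ via Proposition~\ref{correspoendencia diferencial -campos de Jacobi}. Since $E^{\sigma,-}$ is $\Psi_t$-invariant and Lagrangian of dimension $n-1$, and $E^u$ is isotropic, the two inclusions can be reduced to two growth statements. The first is that no vector of $E^{\sigma,-}(v)$ has a negative Lyapunov exponent in forward time. The second is that no vector of $E^{\sigma,-}(v)$ has a positive exponent in backward time, i.e.\ $E^{\sigma,-}(v)\cap E^s(v)=\{0\}$ and vectors of $E^{\sigma,-}$ do not grow exponentially as $t\to-\infty$. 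From the second statement together with the Oseledets characterization $E^u\oplus E^c=\{\xi:\limsup_{t\to-\infty}\frac1{|t|}\log\|\Psi_t\xi\|\le 0\}$ we get $E^{\sigma,-}\subset E^u\oplus E^c$. Then $E^u\subset E^{\sigma,-}$ will follow by a symplectic/dimension argument.

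\textbf{Step 1 (upper inclusion).} For $\xi=(w,\mathcal S^-_v w)\in E^{\sigma,-}(v)$ we have $\Psi_t\xi\simeq(P_t Y^-_v(t)w,\,P_t\dot Y^-_v(t)w)$ with $\dot Y^-_v=\mathcal U^-_v Y^-_v$. Since $\mathcal U^-_v(t)$ is uniformly bounded by Proposition~\ref{Riccati limitado}, say $\|\mathcal U^-\|\le k$, Gronwall gives $\|Y^-_v(t)w\|\ge e^{-k|t|}\|w\|$. Moreover, $\|\Psi_t\xi\|$ is comparable to $\|Y^-_v(t)w\|$ with constants independent of $t$. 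This alone gives only subexponential control, which is not sharp enough. The sharper input is that $Y^-_v(t)\to 0$ is impossible subexponentially in backward time, and this is where the structure of the Green bundle enters. Using that $\mathcal U^-$ and $\mathcal U^+$ are bounded and that $\mathcal U^-_v-\mathcal U^+_v\ge 0$ (from the limit construction via $Y_T$ and Proposition~\ref{Jacobi divergencia}), one shows that $E^{\sigma,-}$ contains no vector of $E^s$ in forward time. Indeed, if $\xi\in E^{\sigma,-}\cap E^s$, then $\Psi_t\xi\to 0$ exponentially as $t\to+\infty$. Writing $\Psi_t\xi=(P_tY^-(t)w,\dots)$ and comparing with the unbounded growth of $A(t)$ from Proposition~\ref{Jacobi divergencia} through the Wronskian identity $W(Y^-,A)=\mathrm{const}$, the constant can be computed at $t=0$ as $\pm w\neq0$. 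On the other hand, at large $t$ it is bounded by $\|Y^-(t)w\|(\|\dot A\|+\|\mathcal U^-\|\|A\|)$. Since $\|\dot A\|,\|A\|$ grow at most exponentially with a rate controlled by the curvature, while Oseledets gives an exponential decay rate, this is a potential tension. I would instead use the Poincaré recurrence/tempered argument standard in Freire–Mañé: choose $\mu$-typical $v$ returning to a set where $\mathcal S^\pm$ are bounded, and apply the same reasoning for $E^{\sigma,+}$.

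\textbf{Step 2 (lower inclusion).} The symmetric statement for $E^{\sigma,+}$ gives $E^{\sigma,+}\subset E^s\oplus E^c$. Because $\Psi_t$ is symplectic on $(\mathcal Q,\omega^\sigma)$, the Oseledets spaces satisfy $\omega^\sigma(E^u\oplus E^c, E^u)=0$ and $\dim E^u=\dim E^s$. Now $E^{\sigma,-}$ is Lagrangian and contained in $E^u\oplus E^c$, whose $\omega^\sigma$-orthogonal is $E^u$. It follows that $E^u=(E^u\oplus E^c)^{\omega}\subset (E^{\sigma,-})^{\omega}=E^{\sigma,-}$.

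\textbf{Step 3 (existence of the limit).} Given the inclusions, $\Psi_T|_{E^{\sigma,-}}$ preserves the flag $E^u\subset E^{\sigma,-}$. The determinant then splits into the determinant on $E^u$, whose growth rate is the sum of the positive exponents, times the determinant on the quotient $E^{\sigma,-}/E^u\subset E^c$, whose exponents are zero. By Oseledets regularity (angles between the Oseledets spaces are subexponential), the limit exists and equals the sum of positive exponents.

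The main obstacle is Step~1: excluding exponential decay along $E^{\sigma,-}$. My first attempt would be the Wronskian comparison with $A$ together with boundedness of $\mathcal U^\pm$ along a recurrent orbit.
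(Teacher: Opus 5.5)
Your Steps~2 and~3 are fine. In fact, the symplectic-orthogonality argument $E^u=(E^u\oplus E^c)^{\omega}\subset(E^{\sigma,-})^{\omega}=E^{\sigma,-}$ is a cleaner route to the lower inclusion than the paper's block-triangular computation. But both steps rest on $E^{\sigma,-}(v)\subset E^u(v)\oplus E^c(v)$, and your Step~1 does not prove it.

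The Wronskian comparison with $A$ cannot close. From $W(Y^-_v,A)\equiv -I$ you only get $|\langle x,w\rangle|\le\|Y^-_v(t)w\|\,(\|\dot A(t)\|+k\|A(t)\|)\,\|x\|$. Since $A$ may grow exponentially at a rate controlled only by a curvature bound, exponential decay of $Y^-_v(t)w$ at an unknown Oseledets rate gives no contradiction, as you noticed yourself. Boundedness of $\mathcal U^\pm$ and $\mathcal U^-_v-\mathcal U^+_v\ge0$ give no sign information on the exponents; Gronwall only confines them to $[-k,k]$. The suggestion to ``apply the same reasoning to $E^{\sigma,+}$'' is circular, since that symmetric statement is exactly as hard.

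Note also that proving $E^{\sigma,-}\cap E^s=\{0\}$, which is what Step~1 actually aims at, would not by itself give the inclusion through the characterization you quote. A vector with nonzero $E^s$-component grows exponentially in backward time without lying in $E^s$. You would need the extra fact that a measurable invariant subbundle splits along the Oseledets decomposition.

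The missing idea is to work with the vertical subspaces rather than with $E^{\sigma,-}$ directly, since $E^{\sigma,-}(v)=\lim_{t\to+\infty}\Psi_t(\mathcal V_{\Phi_{-t}(v)})$.

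\begin{enumerate}
\item A nonzero $\xi\in\mathcal V_v$ corresponds to a normal Jacobi field with $J^\perp(0)=0$. This field diverges by Proposition~\ref{Jacobi divergencia}, so $\xi\notin E^s(v)$. This is the one place where the no-conjugate-points hypothesis enters.
\item Hence $\mathcal V_v\cap E^s(v)=\{0\}$, and $\mathcal V_v$ is the graph of a linear map $L_v$ from a subspace $R(v)\subset E^c(v)\oplus E^u(v)$ into $E^s(v)$.
\item On a compact set $K$ with $\mu(K)>1-\varepsilon$, for $t>T$, you have uniform bounds $\|L_v\|\le L_M$, $\|\Psi_t|_{E^s(v)}\|\le\eta^t$ and $\|\Psi_{-t}|_{E^c(v)\oplus E^u(v)}\|\le\lambda^t$, with $\lambda\eta<1$.
\item For $v\in K$, Poincar\'e recurrence gives $t_n\to\infty$ with $\Phi_{-t_n}(v)\in K$. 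Then $\Psi_{t_n}(\mathcal V_{\Phi_{-t_n}(v)})$ is the graph of a map from a subspace of $E^c(v)\oplus E^u(v)$ into $E^s(v)$, with norm at most $L_M(\lambda\eta)^{t_n}\to0$.
\item Therefore the limit $E^{\sigma,-}(v)$ lies in the closed subspace $E^c(v)\oplus E^u(v)$, for $\mu$-almost every $v$.
\end{enumerate}

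This graph-transform argument is what your proposal lacks; the rest of your plan then goes through.
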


\begin{proof} 

Fix $v\in \mathcal{O}$ and write
\[
F(v):=E_v^{\sigma,-}=
\lim_{t\to+\infty}\Psi_t\bigl(V_{\Phi_{-t}(v)}\bigr).
\]

The proof is based on the argument of Freire and Mañé for geodesic flows.\\

\noindent \textbf{Step 1: proof of the inclusion $F(v)\subset E^u(v)\oplus E^c(v)$.} We claim that the vertical subspace $\mathcal V_v$ is
transverse to $E^s(v)$. Indeed, let $\xi\in \mathcal V_v\setminus\{0\}$,
and let $J_\xi$ be the corresponding normal magnetic Jacobi field along
the geodesic $\gamma_w$ given by Proposition \ref{correspoendencia diferencial -campos de Jacobi}. By Proposition~\ref{Jacobi divergencia}, given $R>1$, there exist $T>0$ such that the
normal component of $J_\xi$ satisfies:
\[
\|J_\xi^\perp(t)\|> R
\qquad \text{for all } t>T.
\]
Hence
\[
\liminf_{t \to \infty}\frac{1}{t}\log \|\Psi_t(\xi)\|\geq\liminf_{t\to+\infty}\frac1t\log \|J_\eta^\perp(t)\|\ge 0.
\]
On the other hand, if $\xi\in E^s(v)$, then by definition of the
stable subspace 
\[
\lim_{t\to+\infty}\frac1t\log \|\Psi_t(\xi)\|
<0,
\]
which is impossible. Therefore
\[
\mathcal V_v\cap E^s(v)=\{0\}.
\]

Since
\[
Q_v=E^s(v)\oplus E^c(v)\oplus E^u(v),
\]
it follows that that there exist $R(v)\subseteq E^c(v)\oplus E^u(v)$ such that $\mathcal V_v$ is the graph of a unique linear operator
\[
 L_v:R(v)\to E^s(v),
\]
that is,
\[
\mathcal V_v=\graph(L_v).
\]
Fix $\varepsilon>0$ small. By standard measure-theoretic arguments, we can find a compact set $K\subset \mathcal O$ with $\mu(K)>1-\varepsilon$ such that, for every $v \in K$,

\begin{itemize}
    \item $\sup_{\xi\in R(v)}\|L_v(\xi)\|=L_M<\infty$,
    \item There exist constants
\[
T>0,\qquad 0<\eta<1<\lambda<\eta^{-1},
\]
such that, for all $t>T$,
\[
\|\Psi_t(\xi)\|<\eta^t\|\xi\|
\quad\text{for all }\xi\in E^s(v),
\qquad
\|\Psi_{-t}(\zeta)\|<\lambda^t\|\zeta\|
\quad\text{for all }\zeta\in R(v).
\]
\end{itemize}

Fix $v\in K$. By the Poincaré Recurrence Theorem, there exists a sequence $(t_n)$ with $t_n\to\infty$ such that
$
\Phi_{-t_n}(v)\in K
$
for every $n$. Consider the transported subspace
\[
V_{-t_n}(v):=\Psi_t\bigl(V_{\Phi_{-t_n}(v)}\bigr).
\]
Since $V_{\Phi_{-t_n}(v)}$ is a graph over $R(\Phi_{-t_n}(v))$, the subspace $V_{-t_n}(v)$ is also a graph over $R(v)$, with graph operator
\[
L_{t_n,v}
=
\Psi_{t_n}|_{E^s(\Phi_{-t_n}(v))}
\circ
L_{\Phi_{-t_n}(v)}
\circ
\left(\Psi_{t_n}|_{R(\Phi_{-t_n}(v))}\right)^{-1}.
\]
Moreover,
\begin{align*}
\|L_{t_n,v}\|
\le &
\|\Psi_{t_n}|_{E^s(\Phi_{-t_n}(v))}\|\,
\|L_{\Phi_{-t_n}(v)}\|\,
\|\left(\Psi_{t_n}|_{R(\Phi_{-t_n}(v))}\right)^{-1}\|\\
\le &L_M(\lambda\eta)^{t_n}
\end{align*}

Since $\lambda\eta<1$, $\|L_{t_n,v}\|\to0$ as $n\to \infty.$ Therefore,

\[
F(v)=\lim_{n \to \infty}V_{t_n}(v)=\lim_{n \to \infty} \Psi_{t_n}({R(\Phi_{-t_n}(v)))}\subseteq E^c(v)\oplus E^u(v).
\]

Since this inclusion holds for every $v\in K$ and $\mu(K)>1-\varepsilon$, where $\varepsilon>0$ is arbitrary, it follows that the inclusion holds for $\mu$-almost every $v\in\mathcal O$.\\

\noindent \textbf{Step 2: proof of the inclusion $E^u(v)\subset F(v)$.}

Identifying each fiber of the reduced bundle with
\[
Q_v\simeq v^\perp\oplus v^\perp,
\]
we endow $Q_v$ with the canonical inner product
\[
\langle \langle(\xi_1,\eta_1),(\xi_2,\eta_2)\rangle \rangle=\langle \xi_1,\xi_2\rangle_g+\langle \eta_1,\eta_2\rangle_g,
\]
and the compatible complex structure
\[
\mathfrak J(\xi,\eta)=(-\eta,\xi).
\]

It follows from Lemma 4.1 of \cite{assenza2025magnetic} that the symplectic form satisfies
\[
\omega_\sigma(\zeta_1,\zeta_2)=\langle \langle \zeta_1,\mathfrak J\zeta_2\rangle \rangle.
\]

Consequently, for every Lagrangian subspace $F(v)\subset Q_v$ we obtain
the orthogonal decomposition
\[
Q_v=F(v)\oplus \mathfrak JF(v).
\]

Moreover, since $F(v)$ is Lagrangian, $\mathfrak JF(v)$ is also Lagrangian.

Choose an orthonormal basis
\[
e_1(v),\dots,e_{n-1}(v)
\]
of $F(v)$ and complete it to an orthonormal symplectic basis of $Q_v$ by defining
\[
e_{n+i}(v)=\mathfrak J e_i(v),\qquad i=1,\dots,n-1,
\]
and do the same along the orbit in $Q_{\Phi_t(v)}$.

With respect to
\[
Q_v=F(v)\oplus \mathfrak JF(v),\qquad
Q_{\Phi_t(v)}=F(\Phi_t(v))\oplus \mathfrak JF(\Phi_t(v)),
\]
we have
\[
\Psi_t=
\begin{pmatrix}
A_t(v) & C_t(v)\\
0 & D_t(v)
\end{pmatrix}
\qquad
\text{and}
\qquad
D_t(v)=(A_t(v)^*)^{-1},
\]
because $\Psi_t$ preserves the form symplectic and $(\Psi_t)^{*}\mathfrak J(\Psi_t)=\mathfrak J$. Now, note that as $(\Psi_t)(\Psi_{-t})=I$, we have
\[
\Psi_{-t}=
\begin{pmatrix}
A_t(v)^{-1} & *\\
0 & A_t(v)^*
\end{pmatrix}.
\]

Suppose, by contradiction, that there exists
\[
\xi\in E^u(v)\setminus F(v).
\]
Writing
\[
\xi=z+w,\qquad z\in F(v),\quad w\in \mathfrak JF(v),\quad w\neq 0.
\]

Applying $\Psi_{-t}$ to $\xi=z+w$, we obtain
\[
\Psi_{-t}(\xi) = At^{-1}z+(*\,w)+A_t^*w.
\]
The component of $\Psi_{-t}(\xi)$ in
$\mathcal JF(\Phi_{-t}(v))$ is exactly $A_t^*w$. Therefore
\[
\|\Psi_{-t}(\xi)\|\ge\|A_t^*w\|\qquad \text{ and } \qquad \lim_{t\to +\infty}\frac1t\log\|\Psi_{-t}(\xi)\|\ge\lim_{t\to +\infty}\frac1t
\log\|A_t^*w\|.
\]

Now, all Lyapunov exponents of $A_t$ on $F(v)$ are nonnegative because $F(v)\subset E^u(v)\oplus E^c(v)$. Indeed, $F(v)$ contains the unstable Oseledec space and
possibly part of the central space.

Since $A_t$ and its adjoint $A_t^*$ have the same singular values,
they have the same Lyapunov spectrum. Therefore, because all Lyapunov
exponents of $A_t$ on $F(v)$ are nonegative, the same holds for
$A_t^*$.  Since $w\neq0$, it follows
that
\[
\lim_{t\to +\infty}\frac1t\log\|\Psi_{-t}(\xi)\|\ge\lim_{t\to+\infty}\frac1t\log\|A_t^*w\|\ge 0.
\]

On the other hand, because
\[
\xi\in E^u(v),
\]
all Lyapunov exponents of $\xi$ are strictly positive.

Equivalently, in negative time one has exponential contraction:
There exists $\lambda>0$ such that
\[
\lim_{t\to+\infty}\frac1t\log\|\Psi_{-t}(\xi)\|=-\lambda<0.
\]
This contradicts the previous inequality.

Therefore, no such vector $\xi$ can exist, and we conclude that
\[
E^u(v)\subset F(v).
\]

\noindent \textbf{Step 3:} Finally, the existence of the limit 
\[
\lim_{T\to+\infty}\frac1T\log\left|\det\!\left(\Psi_T\big|_{E^{\sigma,-}(v)}\right)\right|
\]
follows immediately from Oseledec's theorem.

\end{proof}

We can now combine the preceding proposition with the properties of the unstable Green bundle established earlier.

\begin{proof}[Proof of Theorem \ref{teorema expoente}]

Fix a point $v\in\mathcal O$. Since $E^{\sigma,-}(v)$ is the graph of the
symmetric operator $\mathcal S_v^-:v^\perp\to v^\perp$, the linear map
\[
\Pi_v:=d\pi_v\big|_{E^{\sigma,-}(v)}:E^{\sigma,-}(v)\longrightarrow v^\perp
\]
is an isomorphism.
Note that
\begin{equation}\label{conjugação Y e psi}
Y^-_v(t)=\Pi_{\Phi_t(v)}\circ\Psi_t\big|_{E^{\sigma,-}(v)}\circ
\Pi_v^{-1}:v^\perp\longrightarrow v^\perp. 
\end{equation}

For an invertible differentiable matrix $A(t)$, we have
\[
\frac{d}{dt}\log|\det A(t)|=\tr\!\bigl(\dot A(t)A(t)^{-1}\bigr).
\]
Applying this to $Y^-_v(t)$, we obtain
\[
\frac{d}{dt}\log|\det Y^-_v(t)|=\tr\!\bigl(\dot Y^-_v(t)Y^-_v(t)^{-1}\bigr)=\tr\bigl(\mathcal U^-_{\Phi_t(v)}(0)\bigr).
\]
Therefore,
\[
\frac1T\log|\det Y^-_v(T)|=\frac1T\int_0^T \tr\bigl(\mathcal U^-_{\Phi_t(v)}(0)\bigr)\,dt.
\]

Follows from \eqref{conjugação Y e psi} that
\[
\Psi_T\big|_{E^{\sigma,-}(v)}=\Pi_{\Phi_T(v)}^{-1}\circ Y^-v(T)\circ \Pi_v.
\]
Then
\[
\log\left|\det\!\left(\Psi_T\big|_{E^{\sigma,-}(v)}\right)\right|=
\log|\det \Pi^{-1}_{\Phi_T(v)}|+|+ \log|\det Y^-_v(T)| +\log|\det \Pi_v|
\]
It is immediate that $\Pi_v$ is uniformly bounded. Moreover, since $\mathcal U^-$ is uniformly bounded on $\Sigma_s$ by Proposition~\ref{Riccati limitado}, it follows that $\Pi_v^{-1}$ is also uniformly bounded. Then, by Proposition \ref{osedelecs}
\[
\chi^\sigma(v)=\lim_{T\to+\infty}\frac1T\log\left|\det\!\left(\Psi_T\big|_{E^{\sigma,-}(v)}\right)\right|=\lim_{T\to+\infty}\frac1T
\int_0^T \tr\bigl(\mathcal U^-_{\Phi_t(v)}(0)\bigr)\,dt.
\]
for $\mu$-almost every $v\in\Sigma_s.$
\end{proof}

\begin{proof}[Proof of Corollary \ref{segundo corolario}] Analogously to the proof of Theorem~A in \cite{assenza2025magnetic}, one sees that

\begin{equation}\label{integral tr and Ric}
\int_{\Sigma_s}\tr\bigl((\mathcal U^-_v(0))^2\bigr)\,d\mu=-(n-1)
\int_{\Sigma_s}\Ric^\Omega_s(v)\,d\mu.
\end{equation}

Let $\lambda_1(v),\ldots,\lambda_{n-1}(v)$ denote the eigenvalues of
the symmetric operator $\mathcal U^-_v(0)$. Using Cauchy--Schwarz inequality,
\[
\bigl(\tr\mathcal U^-_v(0)\bigr)^2=\left(\sum_{i=1}^{n-1}\lambda_i(v)\right)^2\le(n-1)\sum_{i=1}^{n-1}\lambda_i(v)^2=(n-1)\tr\bigl((\mathcal U^-_v(0))^2\bigr).
\]

Then

\begin{equation}\label{integral U e U ao quadrado}
(\mathfrak h_\mu^\sigma)^2=\left(\int_{\Sigma_s}\tr(\mathcal U^-_v(0))\,d\mu\right)^2\le\int_{\Sigma_s}(\tr\mathcal U^-_v(0))^2\,d\mu
\le(n-1)\int_{\Sigma_s}\tr\bigl((\mathcal U^-_v(0))^2\bigr)\,d\mu.
\end{equation}

Therefore by \eqref{integral tr and Ric}
\[
(\mathfrak h_\mu^\sigma)^2\le-(n-1)^2\int_{\Sigma_s}\Ric^\Omega_s(v)\,d\mu.
\]

And
\[
\mathfrak h_\mu^\sigma\le(n-1)\left(-\int_{\Sigma_s}\Ric^\Omega_s(v)\,d\mu\right)^{1/2}.
\]

Assume now that equality holds. By \eqref{integral U e U ao quadrado}, we have
\[
(\tr\mathcal U^-_v(0))^2=(n-1)\tr\bigl((\mathcal U^-_v(0))^2\bigr)
\]
and the equality case in the Cauchy--Schwarz inequality implies that all eigenvalues of $\mathcal U^-_v(0)$ coincide for
$\mu$-almost every $v$. Hence
\[
\mathcal U^-_v(t)=\lambda_v(t)I.
\]
Substituting into the magnetic Riccati equation yields
\[
\dot{\lambda}_v(t)I+\lambda_v(t)^2I+(M_s^\Omega)_{\Phi_t(v)}=0.
\]
Therefore

\[
(M_s^\Omega)_v=-\bigl(\dot{\lambda}_v(0)+\lambda_v(0)^2\bigr)I.
\]
Hence $(M_s^\Omega)_v$ is a scalar multiple of the identity for $\mu$-almost every $v\in\Sigma_s$.

\end{proof}

We now introduce the notion of absence of magnetic focal points, which will serve as the geometric assumption ensuring that the unstable Riccati operator is positive semidefinite.

\begin{definition}
We say that the magnetic system $(M,g,\sigma)$ has \textit{no focal points
along the magnetic trajectory} $\gamma$ if, for every nontrivial
normal magnetic Jacobi field $J$ along $\gamma$ with $J(0)=0$, the
function
\[
t\longmapsto \|J^\perp(t)\|^2
\]
is strictly increasing on $(0,\infty)$.

Equivalently,
\[
\frac{d}{dt}\|J^\perp(t)\|^2>0\qquad \text{for every }t>0,
\]
for every nontrivial normal magnetic Jacobi field $J$ with $J(0)=0$. 
\end{definition}

It is clear that magnetic systems without focal points have no conjugate points. Moreover, the argument in the proof of Proposition~3.2 of \cite{assenza2025magnetic} shows that magnetic systems with non-positive magnetic sectional curvature are without focal points.

Geometrically, the absence of focal points strengthens the absence of conjugate points by requiring a monotonicity property for normal magnetic Jacobi fields. That is, if a normal magnetic Jacobi field is zero at some point, then its norm increases strictly along the trajectory. This additional geometric control plays a fundamental role in the sequence. As in the classical Riemannian setting, it yields positivity properties for the Green bundles.

\begin{lemma}\label{lema U^{+} positivo}
Assume that $(M,g,\sigma)$ has no focal points on $\Sigma_s$. Then the unstable Riccati
operator $\mathcal U^{-}_v$ is positive semidefinite for every
$v\in\Sigma_s$.
\end{lemma}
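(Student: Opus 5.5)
We follow the classical Eschenburg argument for Riemannian manifolds without focal points, working with Jacobi tensors along $\gamma_v$ and the symmetric Riccati operators. Since $\mathcal U^-_v(t)=P_{-t}\circ\mathcal U^-_{\Phi_t(v)}(0)\circ P_t$, it suffices to prove that $\mathcal U^-_v(0)$ is positive semidefinite for every $v\in\Sigma_s$.

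\textbf{Step 1 (approximants are positive definite).} For $T>0$ we consider the backward analogue of $Y_T$, namely the Jacobi tensor
\[
Y^-_T(t)=B(t)-A(t)A(-T)^{-1}B(-T),
\]
which satisfies $Y^-_T(0)=Id$ and $Y^-_T(-T)=0$. The tensor $A$ is invertible on $(-\infty,0)$ by the same no-conjugate-points argument used for $t>0$, so $Y^-_T$ is well defined. By the backward version of Lemma~4.4 of \cite{assenza2025magnetic} we have $\dot Y^-_T(0)=\mathcal S_v(-T)\to\mathcal S^-_v=\mathcal U^-_v(0)$ as $T\to\infty$. Now fix $w\in v^\perp\setminus\{0\}$. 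The field $J(t)=P_t\bigl(Y^-_T(t)w\bigr)$ is, via Proposition~\ref{normal jacobi eq reciproca}, the normal component of a nontrivial normal magnetic Jacobi field $J_{w,T}$ vanishing (in its normal part) at $t=-T$. Since $\widetilde{\mathcal D}$ is metric, $\|J^\perp(t)\|=\|Y^-_T(t)w\|$.

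\textbf{Step 2 (apply no focal points).} We reparametrize so that the vanishing time is $0$: the geodesic $\gamma_{\Phi_{-T}(v)}$ carries this Jacobi field, whose normal part vanishes at time $0$. The no-focal-points hypothesis, applied along $\Phi_{-T}(v)$, gives
\[
\frac{d}{dt}\|Y^-_T(t)w\|^2>0\qquad\text{for } t>-T.
\]
At $t=0$ this reads
\[
2\langle \dot Y^-_T(0)w,w\rangle>0,
\]
so $\mathcal S_v(-T)$ is positive definite.

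\textbf{Obstacle.} The main point requiring care is that the definition of no focal points demands $J(0)=0$ for the full field, whereas the construction only guarantees that the normal part $J^\perp(-T)$ vanishes. We handle this by choosing the integration constant in Proposition~\ref{normal jacobi eq reciproca} so that $f(-T)=0$; then $J(-T)=0$ and the field genuinely vanishes. A second check is that the hypothesis applies along every trajectory in $\Sigma_s$, so it can be used along $\gamma_{\Phi_{-T}(v)}$ after time translation, which is legitimate by invariance of the system.

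\textbf{Step 3 (limit).} Letting $T\to\infty$, positive definiteness passes to the limit as semidefiniteness: $\langle\mathcal U^-_v(0)w,w\rangle\ge0$ for all $w$. Conjugating by the isometries $P_t$ then yields the claim for all $t$.
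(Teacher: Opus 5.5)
Your proposal is correct and follows essentially the paper's argument: the paper also takes the Jacobi tensor $Y_T$ with $Y_T(0)=Id$ and $Y_T(T)=0$ (using $T\to-\infty$ where you use $-T$ with $T\to+\infty$), applies the no-focal-points hypothesis to get $\langle \dot Y_T(0)x,x\rangle>0$, and passes to the limit $\dot Y_T(0)\to\mathcal U^-_v(0)$. Your choice $f(-T)=0$ of the integration constant (legitimate, since $\dot\gamma$ is itself a normal magnetic Jacobi field) and your explicit time translation to $\Phi_{-T}(v)$ fill in details the paper skips when it silently replaces the condition $J(0)=0$ by $J^\perp(0)=0$ in the definition of no focal points.
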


\begin{proof}
Fix $v\in\Sigma_s$, and let $Y_{v,T}$ be the matrix solution of \eqref{Jacobi eq} satisfying $Y_T(0)=Id
$ and $Y_T(T)=0$.

Since $(M,g,\sigma)$ has no focal points, every nontrivial normal magnetic
Jacobi field $J$ with $J^\perp(0)=0$ satisfies that
\[
t\longmapsto \|J^\perp(t)\|^2
\]
is strictly increasing on $(0,\infty)$. Applying this in the
fields
\[
J_x^\perp(t):=P_{-t}(Y_T(t)x),\qquad x\in v^\perp\setminus\{0\},
\]
we get
\[
\frac{d}{dt}\|Y_T(t)x\|^2>0\qquad\text{for all }t>T.
\]
In particular, no focal points imply no conjugate points, so $Y_{T}(t)$ is
invertible for every $t>T$.

Define
\[
U_T(t):=\dot Y_T(t)(Y_T(t))^{-1}, \,\, \mbox{for } t>T.
\]
If $\eta=Y_T(t)x$, then
\[
\langle U_T(t)\eta,\eta\rangle=\langle \dot Y_T(t)x,Y_T(t)x\rangle=\frac12\frac{d}{dt}\|Y_T(t)x\|^2>0,\,\,\,\forall t>T. 
\]

Note that if $T\to -\infty$ then $U_T(0)\to \dot Y_v^-(0)=\mathcal U_v^{-}(0)$ then $\langle \mathcal U_v^{-}(t)x,x\rangle\ge0$ for all $x\in v^\perp.$
\end{proof}

\begin{proof}[Proof of Theorem \ref{teorema rigidez}]
Suppose, by contradiction, that
\[
h_{\mu_L}(\Phi)=0.
\]

Since $\mu_L$ is a Lebesgue measure on $\Sigma_s$, by Corollary \ref{Corolário Lebesgue} 
\[
h_{\mu_L}(\Phi) = \int_{\Sigma_s}\tr\bigl(\mathcal U^-_v(0)\bigr)\,d\mu_L(v).
\]
Hence
\[
\int_{\Sigma_s}\tr\bigl(\mathcal U^-_v(0)\bigr)\,d\mu_L(v)=0.
\]

Because the magnetic system has no focal points, Lemma \ref{lema U^{+} positivo} implies that
$
\mathcal U^-_v(0)
$ is positive semidefinite
for every $v\in\Sigma_s$. In particular,
\[
\tr\bigl(\mathcal U^-_v(0)\bigr)\ge 0.
\]

Since $\mathcal U^-_v(0)$ is measurable, we obtain that
\[
\tr\bigl(\mathcal U^-_v(0)\bigr)=0\qquad\text{for $\mu_L$-almost every }v\in\Sigma_s.
\]
Again, since $\mathcal U^-_v(0)$ is symmetric and  positive semidefinite, the vanishing of its trace implies that
\[
\mathcal U^-_v(0)=0\qquad\text{for $\mu_L$-almost every} \,v\in\Sigma_s.
\]

Therefore, from the Riccati equation, we conclude
\[
M_s^\Omega\equiv 0 \qquad\text{for $\mu_L$-almost every }v\in\Sigma_s
\]

It follows that
\[
M_s^\Omega\equiv 0\qquad\text{on }\Sigma_s.
\]
This contradiction concludes the proof.
\end{proof}

\bibliographystyle{alpha}
\bibliography{main}
 
\end{document}